\numberwithin{equation}{section} % \renewcommand{\rm}{\normalshape} %
\newtheorem{thm}{Theorem}[section]
\newtheorem{lemma}[thm]{Lemma}
\newtheorem{de}[thm]{Definition}
\newtheorem{rem}[thm]{Remark}
\newcommand{\R}{\mathbb{R}}
\newcommand{\N}{\mathbb{N}}
\newcommand{\T}{\mathbb{T}}
\newcommand{\A}{\mathcal{A}}
\def\d{ \,\mathrm{d}}
\begin{document}
\title{Quantitative Matrix-Driven Diophantine approximation on $M_0$-sets}
\author{Bo Tan \and Qing-Long Zhou$^{\dag}$}
\address{$^{1}$ School  of  Mathematics  and  Statistics,
                Huazhong  University  of Science  and  Technology, 430074 Wuhan, PR China}
          \email{tanbo@hust.edu.cn}

\address{$^{2}$ School  of  Mathematics  and  Statistics, Wuhan University of Technology, 430070 Wuhan, PR China }
\email{zhouql@whut.edu.cn}
\thanks{$^{\dag}$ Corresponding author.}

\date{}

\begin{abstract}
Let $E\subset [0,1)^{d}$ be a set supporting a probability measure $\mu$ with   Fourier decay $|\widehat{\mu}({\bf{t}})|\ll (\log |{\bf{t}}|)^{-s}$ for some constant $s>d+1.$ Consider a sequence of expanding integral matrices $\mathcal{A}=(A_n)_{n\in\N}$ such that  the minimal singular values of $A_{n+1}A_{n}^{-1}$ are uniformly bounded below by $K>1$. We prove a quantitative Schmidt-type counting theorem under the following   constraints:
(1) the points of interest are restricted to $E$;
(2) the denominators of the ``shifted'' rational approximations are drawn exclusively from $\mathcal{A}$.
Our result extends the work of Pollington, Velani, Zafeiropoulos, and Zorin (2022) to the matrix setting, advancing the study of Diophantine approximation on fractals. Moreover, it strengthens  the equidistribution property of the sequence $(A_n{\bf x})_{n\in\N}$    for $\mu$-almost every ${\bf x}\in E.$ Applications   include the normality of vectors and shrinking target problems on fractal sets.

\end{abstract}
\keywords{Equidistribution, Diophantine approximation.}
\subjclass[2010]{Primary 28A80; Secondary 11J83}

\maketitle
%{\small \tableofcontents}
\section{Introduction}
This paper further develops quantitative theory of Diophantine approximation on the so-called $M_0$-sets in $\mathbb{R}^{d}$. Throughout, for a real vector ${\bf x}=(x_1,\ldots,x_d),$ the notation $|{\bf x}|_{\infty}$ refers to the maximum norm, defined as
$$|(x_1,\ldots,x_d)|_{\infty}=\max_{1\le i\le d}|x_i|.$$
As usual, the Fourier transform of a non-atomic probability measure $\mu$ is defined by
$$\widehat{\mu}({\bf t})
:=\int e^{-2\pi i\langle{\bf t},{\bf x}\rangle}{\rm d}\mu({\bf x}) \ \ \ \ ({\bf t}\in \mathbb{R}^{d}),$$
where the scalar product $\langle{\bf t},{\bf x}\rangle$ is defined by
$$\langle{\bf t},{\bf x}\rangle=t_1x_1+\cdots+t_dx_d.$$
\begin{de}
The set $E$ is called an $M_0$-set if it supports a non-atomic probability Borel measure $\mu$ (${\rm supp}(\mu) \subset E$) whose Fourier transform $\widehat{\mu}$  vanishes at infinity, i.e.,
$$|\widehat{\mu}({\bf t})|\to 0 \ \text{as } |{\bf t}|_{\infty}\to \infty.$$
Such a measure $\mu$ is called a Rajchman measure.
\end{de}
The study of $M_0$-sets is motivated by two key aspects. First, numerous classical (including fractal) sets of interest are known to support Rajchman measures (see, for instance \cite{K80,QR03}), we refer to \cite{ABS22,ARS22, BA25,DGW24,HS15,LS20,P22} for some recent works on the logarithmic and polynomial Fourier decay for fractal measures. Second, there is a natural heuristic expectation that the behavior of such measures shares significant similarities with that of the Lebesgue measure. We focus on
the latter, especially in equidistribution and Diophantine approximation.

\subsection{Equidistribution on $M_0$-sets in $\mathbb{R}$}
A sequence $(x_n)_{n\in\N}$ is called \emph{equidistribution} (or \emph{uniformly distributed modulo one}) if for each sub-interval $[a,b]\subseteq[0,1],$ the following limit holds$\colon$
\begin{equation}\label{equi}
\lim_{N\to\infty}\frac{\sharp\big\{1\le n\le N\colon x_n \in [a,b]\!\!\!\!\pmod1\big\}}{N}=b-a,
\end{equation}
where $\sharp$ denotes the cardinality of a finite set.
The concept of equidistribution, first introduced in Weyl's groundbreaking 1916 paper \cite{Weyl16}, has since become a cornerstone of mathematical analysis, with deep connections to number theory, fractal geometry, and probability theory. Despite its elementary definition, proving equidistribution for explicit sequences often requires sophisticated techniques and remains a substantial challenge.
A notable example is the long-standing open problem of whether the sequence $\{(3/2)^n\}$
is equidistributed  \cite{D09,FLP95}. This question, closely tied to Diophantine approximation and the distribution of powers of $3/2$, has resisted resolution even with modern tools, including Weyl's criterion \cite{B12,KN74}, which reduces the problem to estimating associated exponential sums.
\iffalse
The concept of equidistribution, first introduced in Weyl's groundbreaking 1916 paper \cite{Weyl16}, has since evolved into a fundamental topic in mathematical analysis. It has profound connections to diverse areas including number theory, fractal geometry, and probability theory. Despite its simple definition, establishing equidistribution for specific sequences often presents significant challenges. For instance, the question of whether the sequence
$\{(3/2)^{n}\}$ is equidistributed remains open \cite{D09,FLP95}. This problem currently appears intractable, even with the application of Weyl's celebrated criterion \cite{B12,KN74}, which transforms the equidistribution problem into one of bounding certain exponential sums.

Most notably, the foundational theorem of Davenport \emph{et al.} \cite{DEL63} in uniform distribution theory establishes a profound connection between the generic distribution properties of the sequence $(q_nx)_{n\in\N}$ (with $x$ constrained to the support of $\mu$) and the decay characteristics of the Fourier transform $\widehat{\mu}.$
\fi

A pivotal result in uniform distribution theory, established by Davenport \emph{et al.} \cite{DEL63}, reveals a deep relationship between the generic distribution properties of the sequence $(q_nx)_{n\in\N}$ (where $x$ lies in the support of $\mu$) and  the decay rate of the Fourier transform $\widehat{\mu}.$

\begin{thm}[Davenport-Erd\H{o}s-Leveque, \cite{DEL63}]\label{DEL}
Let $\mu$ be a Borel probability measure supported on a subset $E\subseteq[0,1].$ Let $\mathcal{A}=(q_n)_{n\in\N}$
be a sequence of reals satisfying
$$\sum_{N=1}^{\infty}\frac{1}{N^{3}}\sum_{m,n=1}^{N}\widehat{\mu}(k(q_n-q_m))<\infty$$for any $k\in \mathbb{Z}\setminus\{0\},$ then for $\mu$ almost all $x\in E$ the sequence $(q_nx)_{n\in \N}$ is equdistributed.
\end{thm}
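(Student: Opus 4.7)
The plan is to invoke Weyl's criterion to recast equidistribution as a decay statement for normalized exponential sums, pass to $L^{2}(\mu)$ via the Fourier transform hypothesis, and then upgrade $L^{2}$-summability to pointwise convergence by a Gál--Koksma-type argument.

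For each $k\in\Z\setminus\{0\}$, set
\[
S_N(x,k):=\frac{1}{N}\sum_{n=1}^N e^{2\pi i k q_n x}.
\]
Weyl's criterion asserts that equidistribution of $(q_n x)_{n\in\N}$ is equivalent to $S_N(x,k)\to 0$ for every $k\in\Z\setminus\{0\}$; since $\Z\setminus\{0\}$ is countable, it suffices to prove, for each fixed $k$, that $S_N(\cdot,k)\to 0$ on a full-$\mu$-measure subset of $E$, and then intersect. Expanding the square and applying Fubini gives
\[
\int_E|S_N(x,k)|^2\,\d\mu(x)=\frac{1}{N^2}\sum_{m,n=1}^N\widehat{\mu}(-k(q_n-q_m))=\frac{1}{N^2}\sum_{m,n=1}^N\widehat{\mu}(k(q_n-q_m)),
\]
the final equality coming from swapping $m$ and $n$ (which leaves the symmetric double sum invariant). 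Dividing by $N$, summing over $N\ge 1$, and applying Tonelli to the stated hypothesis yields
\[
\sum_{N=1}^\infty\frac{|S_N(x,k)|^2}{N}<\infty\qquad\text{for $\mu$-a.e.\ $x\in E$.}
\]

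To convert this weighted summability into $S_N(x,k)\to 0$, I would proceed in two steps. First, writing $a_N=|S_N(x,k)|^2/N$ and $T_N=\sum_{j\le N}a_j$, Abel summation gives
\[
\frac{1}{M}\sum_{N=1}^M|S_N(x,k)|^2=T_M-\frac{1}{M}\sum_{N=1}^{M-1}T_N,
\]
and since $T_M$ converges to a finite limit as $M\to\infty$ and the Cesàro means of $(T_N)$ converge to the same limit, the right-hand side tends to $0$. Second, the elementary estimate $|S_{N+1}(x,k)-S_N(x,k)|\le 2/(N+1)$ encodes a slow-variation property: if $|S_{N_j}(x,k)|^2\ge\varepsilon$ along some subsequence $N_j\to\infty$, then for a fixed $c=c(\varepsilon)>0$ one has $|S_N(x,k)|^2\ge\varepsilon/4$ throughout the block $[N_j,(1+c)N_j]$, which contributes at least $cN_j\varepsilon/4$ to $\sum_{N\le(1+c)N_j}|S_N(x,k)|^2$ and thus forces the Cesàro averages at $M=(1+c)N_j$ to exceed a positive constant, contradicting the first step.

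The main obstacle lies in this last passage: the $L^{2}$-estimate produces only a weighted summability condition, not a genuine pointwise limit, and bridging this gap requires combining the Cesàro vanishing with the slow-variation of the running means $S_N$. Once $S_N(\cdot,k)\to 0$ holds $\mu$-a.e.\ for every fixed $k\in\Z\setminus\{0\}$, intersecting the resulting countable family of full-measure sets yields a full-measure subset of $E$ on which Weyl's criterion is satisfied for all nonzero frequencies simultaneously, completing the proof.
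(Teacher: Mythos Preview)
The paper does not actually prove this theorem: it is stated as a cited result of Davenport--Erd\H{o}s--LeVeque, and the abstract higher-dimensional version (Lemma~\ref{higher-DEL}) is likewise quoted without proof from Bugeaud's book. So there is no in-paper argument to compare against.

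That said, your proof is correct and is precisely the classical Davenport--Erd\H{o}s--LeVeque route. The identification $\int_E|S_N(\cdot,k)|^2\,\d\mu=N^{-2}\sum_{m,n}\widehat{\mu}(k(q_n-q_m))$ is right (the sign disappears by the $m\leftrightarrow n$ symmetry, or equivalently because the hypothesis is assumed for all nonzero $k$), and Tonelli applies since the integrand is nonnegative. Your Abel-summation identity $\frac{1}{M}\sum_{N\le M}|S_N|^2=T_M-\frac{1}{M}\sum_{N<M}T_N$ is correct, and the Ces\`aro argument combined with the slow-variation bound $|S_{N+1}-S_N|\le 2/(N+1)$ does indeed upgrade $\sum_N|S_N|^2/N<\infty$ to $S_N\to 0$. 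This two-step passage is exactly the content of the lemma the paper cites as Lemma~\ref{higher-DEL}; you have simply unpacked it in the one-dimensional case.
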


Recall that a sequence $\mathcal{A}=(q_n)_{n\in\N}$ is said to be \emph{lacunary} if there exists  $K>1$ such that $\mathcal{A}$ satisfies  the classical Hadamard gap condition
\begin{equation*}
\frac{q_{n+1}}{q_n}\ge K \ \ (n\in \N).
\end{equation*}
Theorem \ref{DEL} demonstrates that for any  probability measure $\mu$ on $E$ whose Fourier transform satisfies a polylogarithmic decay condition, the following holds$\colon$ for any lacunary sequence $(q_n)_{n\in\N}$ of positive reals,
the sequence $(q_nx)_{n\in\N}$ is equidistributed for $\mu$-almost all $x.$
In other words, given $y\in[0,1]$ and $r\in(0,1/2),$ for $\mu$-almost all $x\in E,$
\begin{equation*}
\lim_{N\to\infty}\frac{\sharp\big\{1\le n\le N\colon |\!|q_nx-y|\!|\le r\big\}}{N}=2r,
\end{equation*}
where $|\!|\alpha|\!|:=\min\{|\alpha-m|\colon m\in \mathbb{Z}\}$ denotes the distance from $\alpha\in \R$ to the nearest integer.  With this in mind, Pollington \emph{et al.}
\cite{PVZZ22} considered the situation in which the constant $r$ is replaced by a general positive function $\psi\colon \N\to (0,1)$, and established the significant quantitative result on the counting function
$$R(x,N; \gamma, \psi, \mathcal{A}):=\sharp\{1\le n\le N\colon |\!|q_nx-y|\!|\le \psi(q_n)\}.$$
\begin{thm}[Pollington-Velani-Zafeiropoulos-Zorin, \cite{PVZZ22}]\label{PVZZ}
Let $\mathcal{A}=(q_n)_{n\in\N}$ be a lacunary sequence of natural numbers.
Let $y\in [0,1]$ and $\psi\colon \N\to (0,1)$.
 Let $\mu$ be a probability measure supported on a subset $E$ of $[0,1]$. Suppose that
 there exists   $A>2$ such that
$$\widehat{\mu}(t)=O\left((\log |t|)^{-A}\right) ~~\text{as }  |t|\to \infty.$$
Then for any $\varepsilon>0,$ we have that
$$R(x,N; \gamma, \psi, \mathcal{A})=2\Psi(N)+O\Big(\Psi(N)^{2/3}(\log(\Psi(N)+2))^{2+\varepsilon}\Big)$$
for $\mu$-almost all $x\in E,$ where $\Psi(N)=\sum_{n=1}^{N}\psi(q_n).$
\end{thm}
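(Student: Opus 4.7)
\emph{Proof plan.} The proof proceeds via the variance (second moment) method, combining a Fourier analytic expansion of the indicator function of the target interval, the logarithmic Fourier decay of $\mu$, a mean square estimate that exploits the lacunarity of $\mathcal{A}$, and a Gal--Koksma type almost sure inequality to pass from mean square bounds to a pointwise asymptotic.

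First I would write $R(x,N;y,\psi,\mathcal{A})=\sum_{n=1}^{N}\chi_n(x)$, where $\chi_n(x):=\mathbf{1}_{\|q_nx-y\|\le \psi(q_n)}$, and sandwich each $\chi_n$ between trigonometric polynomials $S_n^{\pm}$ of bandwidth $M_n$ (to be optimised), for instance the Beurling--Selberg majorants/minorants for the interval of radius $\psi(q_n)$ around $y$. These satisfy $S_n^{-}\le \chi_n\le S_n^{+}$, have Fourier coefficients $\widehat{S_n^{\pm}}(k)$ of order $\min(\psi(q_n),1/|k|)$, and enjoy $\int (S_n^{+}-S_n^{-})=O(1/M_n)$ against Lebesgue measure. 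Integrating against $\mu$ gives
\[
\int S_n^{\pm}(x)\,\mathrm{d}\mu(x)=2\psi(q_n)+O(1/M_n)+\sum_{0<|k|\le M_n}\widehat{S_n^{\pm}}(k)\,e^{-2\pi iky}\,\widehat{\mu}(-kq_n),
\]
and the assumed decay $|\widehat{\mu}(t)|\ll(\log|t|)^{-A}$ with $A>2$ bounds the Fourier tail by $(\log q_n)^{-A}\log M_n$, which after a judicious choice of $M_n$ (polynomial in $q_n$) sums to an acceptable error in the final asymptotic.

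Second -- and this is the heart of the argument -- I would estimate the variance. Writing $\mathfrak{m}_n:=\int \chi_n\,\mathrm{d}\mu$ and expanding
\[
V(N):=\int\Bigl(\sum_{n\le N}\bigl(\chi_n(x)-\mathfrak{m}_n\bigr)\Bigr)^{2}\mathrm{d}\mu(x)
\]
via the Fourier series of the Beurling--Selberg approximants produces a quadruple sum
\[
\sum_{n,m\le N}\sum_{k,\ell\ne 0}\widehat{S_n^{\pm}}(k)\,\overline{\widehat{S_m^{\pm}}(\ell)}\,e^{-2\pi i(k-\ell)y}\,\widehat{\mu}(\ell q_m-kq_n).
\]
The lacunarity of $\mathcal{A}$ is now used to count, for each fixed pair $(k,\ell)$ and each dyadic range $T$, the number of pairs $(n,m)$ with $|\ell q_m-kq_n|\asymp T$: this count grows only logarithmically. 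Combined with the Fourier decay of $\mu$ evaluated at these integer linear combinations, a dyadic decomposition of the $(k,\ell)$ and $T$ scales will yield a bound of the shape $V(N)\ll \Psi(N)^{4/3}(\log\Psi(N))^{c}$, which is the precise input needed for the $2/3$ exponent in the statement.

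Finally, an application of Chebyshev's inequality and Borel--Cantelli along a geometric subsequence $N_j$ transfers the variance bound to
\[
R(x,N_j;y,\psi,\mathcal{A})-2\Psi(N_j)\ll \Psi(N_j)^{2/3}(\log\Psi(N_j))^{1+\varepsilon'} \quad\text{for $\mu$-a.e.\ }x,
\]
and the monotonicity of $R$ in $N$ interpolates between consecutive $N_j$, inflating the logarithmic exponent to the stated $2+\varepsilon$ (this is the classical Gal--Koksma / Erd\H{o}s--G\'al bridge). The main obstacle is unquestionably the variance step: even for lacunary $(q_n)$, near-resonances $kq_n\approx \ell q_m$ for moderate integer coefficients $k,\ell$ can occur, and the balance between the Selberg bandwidth $M_n$, the range of frequencies $|k|,|\ell|\le M$ retained, and the $(\log|t|)^{-A}$ strength of the Fourier decay must be optimised carefully; it is precisely this balancing, rather than naive square-root cancellation, that produces the exponent $2/3$ in the error term.
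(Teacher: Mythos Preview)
The paper does not prove this theorem; it is quoted from \cite{PVZZ22} as background. The paper's own contribution is Theorem~\ref{TZ}, the matrix generalisation, whose proof in Section~3 specialises (with $d=1$ and $A_n=q_n$) to a proof of the present statement --- in fact with the sharper error exponent $1/2$ in place of $2/3$, as also noted in Remark~1.5.

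Your proposal is essentially the same method as the paper employs for Theorem~\ref{TZ}. The ingredients line up one-for-one: smooth upper/lower approximants to the indicator of the target interval (you use Beurling--Selberg polynomials; the paper uses trapezoidal functions $\mathcal{X}_{r_i(n)}^{\pm}$ with a slope parameter $\varepsilon(n)$ to be optimised, cf.\ Section~3.3); a single-set expectation estimate $\sum_n\mu(E_n^y)=\sum_n\psi(n)+O(1)$ (Lemma~\ref{expect}); a variance/overlap estimate driven by the Fourier decay of $\mu$ and the lacunarity of $\mathcal{A}$ (Lemmas~\ref{intersection}--\ref{overlap}); and a Gal--Koksma type bridge to the almost-everywhere statement (you invoke Chebyshev along a geometric subsequence; the paper cites Philipp's Lemma~\ref{Philipp}, which packages exactly that argument). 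The only visible organisational difference is in the variance step: where you propose a dyadic decomposition in the scale $T\asymp|\ell q_m-kq_n|$, the paper instead splits the index set $(\mathbf{k}_1',\mathbf{k}_2')$ into three regions $\Omega_1,\Omega_2,\Omega_3$ according to whether $|A_m^T\mathbf{k}_1'+A_n^T\mathbf{k}_2'|$ is large, intermediate, or small relative to the singular values, and handles the small-resonance region $\Omega_3$ via a lattice-counting lemma (Lemma~\ref{key}) that exploits the $K^{n-m}$-separation coming from lacunarity. Both routes arrive at the same place; the paper's case split is arguably cleaner because it isolates the genuine near-resonance contribution in a single stroke rather than summing over dyadic shells.
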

\begin{rem}
The study of quantitative counting functions is driven by two fundamental questions in modern number theory:

\begin{itemize}
	\item Existence of normal numbers in fractal sets --– Understanding the prevalence of normal numbers within sets of fractional dimension.
	\item
	Metric Diophantine approximation on manifolds --– Establishing measure-theoretic laws for rational approximations to points lying on smooth  submanifolds.
\end{itemize}

 \iffalse
The investigation of the quantitative theory of counting functions is motivated by two important aspects$\colon$
\begin{itemize}
\item the existence problem of normal numbers in fractal sets;
\item metric theory of Diophantine approximation on manifolds.
\end{itemize}
\fi

Theorem \ref{PVZZ} implies that if a certain probability measure $\mu$ supported on  $E$ has sufficiently rapid Fourier decay, the set $E$ must contain normal numbers. An analogue of Khintchine-type Theorem may also be established.
\end{rem}
\begin{rem}The authors \cite{TZ24} extended the study to a  sequence $\mathcal{A}$ of real numbers (rather than  natural numbers)  and improved the associated error term in Theorem \ref{PVZZ}.
\end{rem}

We are naturally led to ask whether these conclusions remain valid in higher-dimensional   or matrix settings.\footnote{It is worth mentioning that a similar problem was proposed  by Baker-Khalil-Sahlsten \cite[Corollary 1.21]{BKS25}.} Our main result answers this question affirmatively.

\subsection{Equidistribution of vectors on $M_0$-sets}
The concept  of \emph{equidistribution} (or \emph{uniformly distributed modulo one})
for real sequences extends in a natural way to sequences in $\mathbb{R}^{d}$: simply replace the intervals $[a,b]$ in (\ref{equi}) with $d$-dimensional parallelepipeds. Following standard notation, we  write $e(x)=\text{exp}(2\pi ix).$

The sequence of $({\bf x}_n)_{n\in\N}=((x_{n,1},\ldots,x_{n,d}))_{n\in\N}$ of vectors of $\mathbb{R}^{d}$ is said to be \emph{equidistribution} if, for every $d$-dimensional parallelepiped $\mathcal{R}_{{\bf a},{\bf b}}:=[a_1,b_1]\times\cdots\times[a_d,b_d],$ the following limit holds$\colon$
\begin{equation*}
\lim_{N\to\infty}\frac{\sharp\big\{1\le n\le N\colon {\bf x}_n\in \mathcal{R}_{{\bf a},{\bf b}}\!\!\!\!\pmod1\big\}}{N}=\prod_{i=1}^{d}(b_i-a_i).
\end{equation*}
Weyl's equidistribution criterion in the torus $\mathbb{R}^{d}/\mathbb{Z}^{d}$ (see \cite[Theorem 1.17]{B12})
says that $({\bf x}_n)_{n\in\N}\in \mathbb{R}^{d}/\mathbb{Z}^{d}$ is equidistributed if and only if
\begin{equation}\label{DEL}
\lim_{N\to\infty}\frac{\sum_{n=1}^{N} e(-\langle{\bf k},{\bf x}_{n}\rangle)}{N}=0 \ \ {\text{for any } } {\bf k}\in \mathbb{Z}^{d}\setminus \{\bf 0\}.
\end{equation}

The following results utilize the singular values of a $d\times d$ matrix $A.$
Throughout,  let
$$\sigma(A):=\inf\big\{|\!|Ax|\!|_{2}\colon |\!|x|\!|_2=1\big\}$$
be the smallest singular value of $A,$ which is the square root of the smallest eigenvalue of $A^TA$.  Recent work \cite{BKS25} by Baker-Khalil-Sahlsten adapts the methodology of  Queff\'{e}le-Ramar\'{e} \cite[Lemma 7.3]{QR03}, originally developed  for the one dimensional Davenport-Erd\H{o}s-LeVeque criterion,  to establish the   following result.\footnote{A proof of Theorem \ref{BKH} appears in unpublished work by Tuomas Sahlsten and Jonathan Fraser.}
\begin{thm}[Baker-Khalil-Sahlsten, \cite{BKS25}]\label{BKH}
Let $\mu$ be a probability measure on $\mathbb{R}^{d}$ with polylogarithmic Fourier decay. For any expanding integral matrix $A$ on $\mathbb{R}^{d}$, the orbit $(A^{n}{\bf x})_{n\ge 1}$ is equidistributed for $\mu$-almost every ${\bf x}.$
\end{thm}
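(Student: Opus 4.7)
The plan is to combine Weyl's equidistribution criterion on $\mathbb{R}^{d}/\mathbb{Z}^{d}$ with a higher-dimensional analogue of the Davenport-Erd\H{o}s-LeVeque $L^{2}$ criterion, so that the polylogarithmic Fourier decay of $\mu$ gets converted into almost-sure cancellation in the Weyl sums
$$S_{N}({\bf x}):=\frac{1}{N}\sum_{n=1}^{N}e\big(-\langle {\bf k},A^{n}{\bf x}\rangle\big)=\frac{1}{N}\sum_{n=1}^{N}e\big(-\langle {\bf k}_{n},{\bf x}\rangle\big),\qquad {\bf k}_{n}:=(A^{T})^{n}{\bf k}.$$

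First I would fix ${\bf k}\in\mathbb{Z}^{d}\setminus\{{\bf 0}\}$ and expand
$$\int \big|S_{N}({\bf x})\big|^{2}\,{\rm d}\mu({\bf x})=\frac{1}{N^{2}}\sum_{n,m=1}^{N}\widehat{\mu}({\bf k}_{n}-{\bf k}_{m}).$$
The target is to prove the summability
$$\sum_{N=1}^{\infty}\frac{1}{N^{3}}\sum_{n,m=1}^{N}\big|\widehat{\mu}({\bf k}_{n}-{\bf k}_{m})\big|<\infty,$$
which, through the standard maximal-inequality and Borel-Cantelli bootstrap along a geometric subsequence (exactly as in the proof of the Davenport-Erd\H{o}s-LeVeque criterion), upgrades $L^{2}(\mu)$ convergence to $\mu$-a.e.\ convergence $S_{N}({\bf x})\to 0$. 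A countable union over ${\bf k}$ then produces a single full-measure set on which Weyl's criterion holds.

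The heart of the argument is a uniform exponential lower bound on the integer vectors ${\bf k}_{n}-{\bf k}_{m}$. Since $A$ is expanding one has $\rho(A^{-1})=1/\min_{i}|\lambda_{i}(A)|<1$, so Gelfand's formula furnishes $\rho\in(0,1)$ and a constant $C>0$ with $\|A^{-n}\|\le C\rho^{n}$; equivalently $|(A^{T})^{n}{\bf v}|_{\infty}\ge c\,\rho^{-n}|{\bf v}|_{\infty}$ for every ${\bf v}\in\mathbb{R}^{d}$ and some $c>0$. Factoring ${\bf k}_{n}-{\bf k}_{m}=(A^{T})^{m}\big((A^{T})^{n-m}-I\big){\bf k}$ for $n>m$ and applying this bound twice should yield $|{\bf k}_{n}-{\bf k}_{m}|_{\infty}\ge c'\rho^{-n}|{\bf k}|_{\infty}$ once $n-m$ exceeds some constant $n_{0}$, while the $O(n_{0}N)$ small-gap terms are controlled by the trivial estimate $|{\bf k}_{n}-{\bf k}_{m}|_{\infty}\ge 1$, valid because $A^{T}$ is integer invertible and has no eigenvalue equal to $1$. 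Substituting into the hypothesis $|\widehat{\mu}({\bf t})|\ll (\log|{\bf t}|_{\infty})^{-s}$ gives $|\widehat{\mu}({\bf k}_{n}-{\bf k}_{m})|\ll n^{-s}$, and for $s$ sufficiently large the double sum divided by $N^{3}$ is summable.

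The step I expect to be the main obstacle is precisely this uniform exponential lower bound on $|{\bf k}_{n}-{\bf k}_{m}|_{\infty}$: one must rule out quantitative near-resonances of the form $(A^{T})^{n-m}{\bf k}\approx {\bf k}$ that would otherwise destroy the summability, and the Gelfand estimate alone only gives information about $|(A^{T})^{n-m}{\bf k}|_{\infty}$, not about how close the difference $(A^{T})^{n-m}{\bf k}-{\bf k}$ may come to zero in a direction where $(A^{T})^{m}$ contracts. Here the integrality of $A$ is decisive, as it forces ${\bf k}_{n}-{\bf k}_{m}\in\mathbb{Z}^{d}\setminus\{{\bf 0}\}$ for every $n\ne m$; combined with the Gelfand-type estimate above this rules out the resonances quantitatively. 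Everything else is a faithful transcription of the one-dimensional Queff\'elec-Ramar\'e method invoked by the authors.
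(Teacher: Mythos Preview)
Your proposal is correct and follows the same overall architecture as the paper's Section~2 proof of Theorem~\ref{TZ2} (the paper does not give its own proof of Theorem~\ref{BKH}, which it cites from \cite{BKS25}): Weyl's criterion on $\T^{d}$, the higher-dimensional Davenport--Erd\H{o}s--LeVeque $L^{2}$ criterion (Lemma~\ref{higher-DEL}), and then a lower bound on $|(A^{n}-A^{m})^{T}{\bf k}|$ feeding into the polylogarithmic decay of $\widehat{\mu}$.

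The one tactical difference is in how the exponential lower bound is obtained. The paper, working under the singular-value hypothesis of Theorem~\ref{TZ2}, factors $(A_{n}-A_{m})^{T}{\bf k}=A_{m}^{T}(A_{n}A_{m}^{-1}-I)^{T}{\bf k}$ and uses $\sigma(A_{m})\,\sigma(A_{n}A_{m}^{-1}-I)\ge K^{m-1}(K^{n-m}-1)$, which leads to $|\widehat{\mu}|\ll (n-m)^{-\alpha}$. You instead invoke Gelfand's formula to turn the spectral condition $\rho(A^{-1})<1$ into a uniform norm bound $\|A^{-m}\|\le C\rho^{m}$, obtaining $|{\bf k}_{n}-{\bf k}_{m}|_{\infty}\gg \rho^{-n}$ and hence $|\widehat{\mu}|\ll n^{-s}$. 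Both estimates give a summable series $\sum_{N}N^{-3}\sum_{m,n\le N}|\widehat{\mu}({\bf k}_{n}-{\bf k}_{m})|$ for any positive exponent in the Fourier decay. Your Gelfand route is in fact the appropriate one for Theorem~\ref{BKH} as stated, since an expanding integral matrix $A$ need not satisfy $\sigma(A)>1$; specializing the paper's Theorem~\ref{TZ2} to $A_{n}=A^{n}$ requires that extra hypothesis. Two small clean-ups: the phrase ``$A^{T}$ is integer invertible'' is inaccurate (an expanding integral matrix has $|\det A|>1$, so $A\notin GL_{d}(\Z)$); what you actually use, and what suffices, is that $A$ is integral and that $(A^{T})^{n-m}-I$ is invertible over $\R$ because $1$ is not an eigenvalue of $A^{n-m}$. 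Also, your worry about ``a direction where $(A^{T})^{m}$ contracts'' is moot once $m$ is large, precisely by the Gelfand estimate you already invoked.
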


 By considering a sequence of  matrices, we extend this result to broader dynamical systems settings.
\begin{thm}\label{TZ2}
Let $\mu$ be a probability measure on $\mathbb{R}^{d}$ with polylogarithmic Fourier decay. Let $(A_n)_{n\in\N}$ be a sequence of expanding integral matrices. Assume that the minimal singular values of $A_{n+1}A_{n}^{-1}$ are uniformly bounded below by
$K>1$, the orbit $(A_n{\bf x})_{n\ge 1}$ is equidistributed for $\mu$-almost every ${\bf x}.$
\end{thm}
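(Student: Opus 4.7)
My plan is to follow the second-moment (variance) strategy of Davenport--Erd\H{o}s--LeVeque, suitably adapted to the matrix setting. By Weyl's criterion for equidistribution in $\mathbb{R}^{d}/\mathbb{Z}^{d}$ and the countability of $\mathbb{Z}^{d}\setminus\{\mathbf{0}\}$, it suffices to show that for each fixed $\mathbf{k}\in\mathbb{Z}^{d}\setminus\{\mathbf{0}\}$,
$$\frac{1}{N}S_{N}(\mathbf{x}):=\frac{1}{N}\sum_{n=1}^{N}e\bigl(-\langle\mathbf{k},A_{n}\mathbf{x}\rangle\bigr)\longrightarrow 0\quad\text{for }\mu\text{-a.e. }\mathbf{x}.$$
Using the duality $\langle\mathbf{k},A_{n}\mathbf{x}\rangle=\langle A_{n}^{T}\mathbf{k},\mathbf{x}\rangle$ and expanding the modulus square gives the variance identity
$$\int|S_{N}|^{2}\,d\mu=\sum_{m,n=1}^{N}\widehat{\mu}\bigl((A_{n}-A_{m})^{T}\mathbf{k}\bigr),$$
so everything hinges on the growth of $|(A_{n}-A_{m})^{T}\mathbf{k}|$ as a function of $n-m$.

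The heart of the argument is the following singular-value bound. For $m<n$, writing $A_{n}-A_{m}=(A_{n}A_{m}^{-1}-I)\,A_{m}$ and using the submultiplicativity $\sigma(AB)\ge\sigma(A)\sigma(B)$, the telescoping product $A_{n}A_{m}^{-1}=\prod_{j=m}^{n-1}A_{j+1}A_{j}^{-1}$ together with the hypothesis yields $\sigma(A_{n}A_{m}^{-1})\ge K^{n-m}$, whence $\sigma(A_{n}A_{m}^{-1}-I)\ge K^{n-m}-1$ by the reverse triangle inequality for operator norms. Combined with $\sigma(A_{m})\ge\sigma(A_{1})>0$ (telescoping $A_{m}$ itself), this yields, for $n-m$ large enough,
$$|(A_{n}-A_{m})^{T}\mathbf{k}|\ge\sigma(A_{n}-A_{m})\,|\mathbf{k}|\ge\sigma(A_{1})(K^{n-m}-1)\,|\mathbf{k}|\gg K^{n-m}.$$
The polylogarithmic Fourier decay of $\mu$ then converts this into $|\widehat{\mu}((A_{n}-A_{m})^{T}\mathbf{k})|\ll (n-m)^{-s}$ for the ambient decay exponent $s>0$, while the $O(1)$ pairs with small $n-m$ contribute $O(N)$ trivially since $|\widehat{\mu}|\le 1$.

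Summing over $1\le m,n\le N$, the diagonal contributes $N$ and the off-diagonal contribution is bounded by $\sum_{\ell=1}^{N-1}(N-\ell)\ell^{-s}$, which in every regime $s>0$ (with a $\log N$ factor at $s=1$) gives $\sum_{N\ge 1}N^{-3}\int|S_{N}|^{2}\,d\mu<\infty$. Monotone convergence then forces $\sum_{N}|S_{N}(\mathbf{x})|^{2}/N^{3}<\infty$ for $\mu$-a.e. $\mathbf{x}$, and the standard dyadic-interpolation argument underlying the Davenport--Erd\H{o}s--LeVeque criterion converts this into $S_{N}(\mathbf{x})/N\to 0$ a.e., completing the proof via Weyl. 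The main obstacle is the singular-value lower bound on the \emph{difference} $A_{n}-A_{m}$: there is no direct triangle-type inequality relating $\sigma(A_{n}-A_{m})$ to $\sigma(A_{n})$ and $\sigma(A_{m})$, so one is forced to factor through the multiplicative structure $A_{n}A_{m}^{-1}-I$ and use the consecutive-ratio hypothesis in an essential way; once this bound is in hand, the remaining steps are routine variance estimates.
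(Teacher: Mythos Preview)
Your proposal is correct and follows essentially the same route as the paper: Weyl's criterion plus the higher-dimensional Davenport--Erd\H{o}s--LeVeque variance bound, with the key input being the factorization $A_n-A_m=(A_nA_m^{-1}-I)A_m$ and the singular-value estimate $\sigma(A_nA_m^{-1}-I)\ge K^{n-m}-1$ coming from the telescoped ratio hypothesis. The only cosmetic difference is that the paper uses the stronger bound $\sigma(A_m)\gg K^{m-1}$ (so that small $m$ are exceptional), whereas you use the cruder $\sigma(A_m)\ge\sigma(A_1)$ and instead treat small gaps $n-m$ as exceptional; both lead to the same $O(N)$ contribution from the exceptional pairs and the same convergent series $\sum_N N^{-3}\int|S_N|^2\,d\mu$.
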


%Fix ${\bf y}=(y_1,\ldots,y_d)\in [0,1)^{d}.$

Let $r_i\colon \N\to (0,\frac{1}{2})$ be a real positive function for $1\le i\le d$, and define
${\bf r}:\N\to \R^d$ as
${\bf r}:=(r_1,\ldots,r_d).$
For $n\in \N,$ let
\begin{equation}\label{Rn}
 \mathcal{R}_{n}=\mathcal{R}( {\bf r}(n)):=\{{\bf x}\in [0,1)^{d}\colon |\!|x_i|\!|\le r_i(n),\ 1\le i\le d\}.
\end{equation}
%Clearly, $\mathcal{R}_{n}$ is a parallelepiped.
When ${\bf r}\equiv{\bf a}$ is a constant function,  the parallelepiped  $\mathcal{R}( {\bf r}(n))$ is independent of $n$, which will be also denoted by $\mathcal{R}( {\bf a})$.
For ${\bf y}=(y_1,\ldots,y_d)\in [0,1)^{d},$ let  ${\bf y}+ \mathcal{R}_{n}$ denote the ``shifted'' parallelepiped with center at ${\bf y}$.
 Theorem \ref{TZ2} implies that for $\mu$-almost all $x\in E$, the sequence $(A_n{\bf x})_{n\in \N}$ modulo one ``hits''   ${\bf y}+\mathcal{R}({\bf a})$ with
the ``expected'' number of times. In other words, for $\mu$-almost all ${\bf x}\in E,$
\begin{equation}\label{limit}
\lim_{N\to\infty}\frac{\sharp\big\{1\le n\le N\colon A_n{\bf x} \in {\bf y}+\mathcal{R}({\bf a})\!\!\!\!\pmod1\big\}}{N}=2^{d}a_1\cdots a_d.
\end{equation}
We investigate  the setting where the sizes of the parallelepipeds are allowed to shrink with time. With this in mind, we study the counting function
$$R({\bf x},N)=R({\bf x},N;{\bf y}, {\bf r},\mathcal{A}):=\sharp\{1\le n\le N\colon A_n{\bf x} \in {\bf y}+\mathcal{R}( {\bf r}(n))\!\!\!\!\pmod1\}$$
As indicated in the definition, we will typically write $R({\bf x},N;{\bf y}, {\bf r},\mathcal{A})$
simply as $R({\bf x},N)$, since the remaining three parameters are generally fixed and clear from context. The following result shows that if $\widehat{\mu}$ decays sufficiently rapidly, then for $\mu$-almost every ${\bf x}\in E,$ the sequence $(A_n{\bf x})_{n\in \N}$ modulo 1 ``hits'' the shrinking parallelepipeds ${\bf y}+\mathcal{R}_{n}$ with the ``expected'' number of times.

\begin{thm}\label{TZ}
Let $\mu$ be a probability measure supported on a subset $E$ of $[0,1)^{d}$.
Let $(A_n)_{n\in\N}$ be a sequence of expanding integral matrices. Assume that the minimal singular values of $A_{n+1}A_{n}^{-1}$ are uniformly bounded below by
$K>1$,  and
there exists $s>d+1$ such that
$$\widehat{\mu}({\bf t})=O\Big((\log |{\bf t}|_{\infty})^{-s}\Big) ~~\text{as}~  |{\bf t}|_{\infty} \to \infty.$$
Then for any $\varepsilon>0,$ we have that
$$R({\bf x},N)=\Psi(N)+O\Big(\Psi(N)^{\frac{d}{d+1}}(\log \Psi(N)+2)^{2+\varepsilon}\Big)$$
for $\mu$-almost all ${\bf x}\in E,$ where $\Psi(N)=\sum_{n=1}^{N}2^{d}r_1(n)\cdots r_{d}(n).$
\end{thm}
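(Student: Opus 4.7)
The plan is to adapt the one-dimensional strategy of Pollington--Velani--Zafeiropoulos--Zorin (Theorem \ref{PVZZ}) to the $d$-dimensional matrix-driven setting. The argument proceeds in three stages: a Fourier-analytic sandwich of the indicator function of the parallelepiped, an $L^{2}(\mu)$ (variance) estimate that combines the polylogarithmic Fourier decay of $\mu$ with the singular-value growth of $A_nA_m^{-1}$, and a G\'al--Koksma / Borel--Cantelli upgrade along a geometric subsequence.

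First, for a resolution parameter $M$ (to be optimised later), I would use $d$-fold tensor products of one-dimensional Selberg--Beurling (or Vaaler) majorants/minorants to construct trigonometric polynomials
\begin{equation*}
L_n({\bf z})\le \mathbf{1}_{{\bf y}+\mathcal{R}_n}({\bf z})\le U_n({\bf z})\qquad({\bf z}\in[0,1)^{d})
\end{equation*}
with spectra in $\{{\bf k}\in\Z^d:|{\bf k}|_\infty\le M\}$, satisfying $0\le\widehat{U_n}({\bf 0})-2^d r_1(n)\cdots r_d(n)\ll M^{-1}\sum_j\prod_{i\ne j}r_i(n)$ and the coefficient bound $|\widehat{U_n}({\bf k})|\ll\prod_j\min\{r_j(n),|k_j|^{-1}\}$ (with analogous bounds for $L_n$). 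This reduces the theorem to an $L^{2}(\mu)$ estimate for
\begin{equation*}
S_N({\bf x}):=\sum_{n=1}^{N}\bigl(U_n(A_n{\bf x})-\widehat{U_n}({\bf 0})\bigr)=\sum_{n=1}^{N}\sum_{\substack{{\bf k}\in\Z^d\setminus\{{\bf 0}\}\\ |{\bf k}|_\infty\le M}}\widehat{U_n}({\bf k})\,e(\langle A_n^{T}{\bf k},{\bf x}\rangle)
\end{equation*}
and its minorant analogue, while the sandwich itself contributes a deterministic error controlled by $M$.

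Next, squaring and integrating against $\mu$,
\begin{equation*}
\int|S_N|^{2}\,\d\mu=\sum_{m,n=1}^{N}\sum_{{\bf k},{\bf k}'}\widehat{U_n}({\bf k})\overline{\widehat{U_m}({\bf k}')}\,\widehat{\mu}\bigl(A_m^{T}{\bf k}'-A_n^{T}{\bf k}\bigr),
\end{equation*}
whose diagonal contribution $(m=n,\,{\bf k}={\bf k}')$ is $\ll\Psi(N)$ by a Parseval-type computation. For the off-diagonal part with $n>m$ I would factor $A_n^{T}{\bf k}-A_m^{T}{\bf k}'=A_m^{T}\bigl((A_nA_m^{-1})^{T}{\bf k}-{\bf k}'\bigr)$ and iterate the hypothesis via the submultiplicativity $\sigma(AB)\ge\sigma(A)\sigma(B)$ to obtain $\sigma(A_nA_m^{-1})\ge K^{n-m}$. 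For ${\bf k}\ne{\bf 0}$ this forces $|A_n^{T}{\bf k}-A_m^{T}{\bf k}'|_\infty\gtrsim K^{n-m}$ (the regime $n-m\lesssim\log M$ being handled separately by the trivial bound $|\widehat\mu|\le 1$), and the Fourier decay hypothesis yields $|\widehat\mu(A_n^{T}{\bf k}-A_m^{T}{\bf k}')|\ll|n-m|^{-s}$. Plugging in the coefficient bounds and summing over ${\bf k},{\bf k}'$ introduces at most $(\log M)^{d}$ logarithmic factors; since $s>d+1$, the double sum in $m,n$ converges. Balancing the sandwich error against $\sqrt{\int|S_N|^{2}\,\d\mu}$ by the choice $M\asymp\Psi(N)^{1/(d+1)}$ is precisely what produces the exponent $d/(d+1)$.

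Finally, Chebyshev's inequality combined with a G\'al--Koksma style Borel--Cantelli argument along a geometric subsequence $(N_j)$ (chosen so that $\Psi(N_{j+1})/\Psi(N_j)$ is bounded) gives the desired almost-sure bound at the sample points, and the monotonicity of $\Psi$ together with the non-negativity of $U_n$ and $L_n$ promotes it to every $N$, producing the logarithmic factor $(\log(\Psi(N)+2))^{2+\varepsilon}$. The main obstacle, and the essentially new content relative to the one-dimensional work of \cite{PVZZ22,TZ24}, lies in the variance estimate: the scalar lacunarity $q_n/q_m\ge K^{n-m}$ of the one-dimensional theory must be replaced by the singular-value lower bound on $A_nA_m^{-1}$ acting on the dual lattice $\Z^{d}$, and the summation over the $d$-dimensional dual modes must be tracked with sufficient precision to keep the total logarithmic loss within $(\log\Psi(N))^{2+\varepsilon}$.
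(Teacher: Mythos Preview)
Your overall architecture --- Fourier sandwich, $L^2(\mu)$ variance estimate, Borel--Cantelli upgrade --- matches the paper's, and your identification of $\sigma(A_nA_m^{-1})\ge K^{n-m}$ as the matrix replacement for scalar lacunarity is exactly right. However, the variance estimate as sketched has a genuine gap.

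The problem is the regime $n-m\lesssim\log M$, which you propose to dispose of by the trivial bound $|\widehat\mu|\le 1$. For each such pair $(m,n)$ this yields a contribution of size $\bigl(\sum_{{\bf k}}|\widehat{U_n}({\bf k})|\bigr)\bigl(\sum_{{\bf k}'}|\widehat{U_m}({\bf k}')|\bigr)\asymp(\log M)^{2d}$, and there are $\asymp N\log M$ such pairs; the resulting $N(\log M)^{2d+1}$ is not controlled by $\Psi(N)^{2d/(d+1)}$ once $\psi(n)$ decays and $N\gg\Psi(N)$. A secondary issue: the decay you record in the far regime is $|n-m|^{-s}$, but summing $(\log M)^{2d}(n-m)^{-s}$ over pairs with $n-m>C\log M$ still carries a factor of $N$, and would need $s>2d+1$ rather than $s>d+1$. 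The factorisation you wrote actually gives $|A_n^T{\bf k}-A_m^T{\bf k}'|\gtrsim\sigma(A_m)K^{n-m}\gtrsim K^{n-1}$, hence the usable decay $n^{-s}$; this part is easily repaired.

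The paper resolves the essential difficulty differently. It uses piecewise-linear bumps with an $n$-dependent smoothing scale $\varepsilon(n)$ rather than a global Beurling--Selberg cutoff, so the Fourier coefficients decay \emph{quadratically} in each coordinate and the $\ell^1$ sum over ${\bf k}'\in\Z^d$ is $\ll\varepsilon(n)^{-d/2}$ with no truncation needed. More importantly --- and this is the idea your sketch is missing --- the paper does not split on $n-m$ but on the size of the combined frequency $A_m^T{\bf k}_1'+A_n^T{\bf k}_2'$. When that frequency is small (so $\widehat\mu$ gives nothing), for each ${\bf k}_2'$ there is at most one compatible ${\bf k}_1'$, and as ${\bf k}_2'$ ranges over $\Z^d\setminus\{{\bf 0}\}$ the points ${\bf t}=(A_nA_m^{-1})^T{\bf k}_2'$ form a $K^{n-m}$-separated lattice $\Gamma$. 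An elementary lemma then bounds
\[
\sum_{{\bf t}\in\Gamma\setminus\{{\bf 0}\}}\prod_{i=1}^d\min\Bigl\{r_i,\frac{1}{t_i^2 r_i\varepsilon}\Bigr\}\ll \sigma^{-1}\varepsilon^{-d/2}
\]
for any $\sigma$-discrete lattice, producing a geometric saving $K^{-(n-m)}$ precisely where your trivial bound loses control. The choice $\varepsilon(n)\asymp\bigl(\sum_{k\le n}\psi(k)\bigr)^{-2/(d+1)}$, together with Philipp's quantitative Borel--Cantelli lemma (which requires the variance bound uniformly over all windows $[a,b]$, not just $[1,N]$), then yields the exponent $d/(d+1)$.
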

\begin{rem}
(1) We observe that when $(A_n)_{n\in\N}$ is a sequence of diagonal matrices, for instance \begin{equation*}
A_n=\begin{pmatrix}
2^{n} & 0 \\
0 &  3^{n}
\end{pmatrix}, \end{equation*}
the coordinate components of the vector $A_n{\bf x}$ decouple into independent one-dimensional terms. Consequently,  the counting function's analysis simplifies to  one-dimensional framework via Fourier decay, mirroring the behavior of the analogous problem on $[0,1).$  By contrast, for non-diagonal matrices  $A_n$,  the components of  $A_n{\bf x}$   exhibit nontrivial  interdependent, introducing dynamics fundamentally distinct from the  one-dimensional case. We need new tools to deal with the resulting complexity.

(2) By definition, $\Psi(N)=2^{d}a_1\cdots a_d$ when $r_i(n)\equiv a_i$ for $1\le i\le d,$ and Theorem \ref{TZ} trivially yields $(\ref{limit})$ %In fact, it provides $(\ref{limit})$
 with an explicit error term.

(3) The exponent in the error term of Theorem \ref{TZ} is likely suboptimal, and we expect this bound to be improved in subsequent work.
\end{rem}

\subsection{Applications}
In this section, we apply our earlier results to investigate normality of vectors and shrinking target problems for matrix transformations of tori on fractal sets.
\subsubsection{Normality of vectors on fractals}
 A vector ${\bf x}$ is called normal with respect to square matrix $A$ if $A^{n}{\bf x}$ is uniformly distributed modulo 1. This concept was introduced by Schmidt \cite{S64}.
To the best of our knowledge, the study of the normality of vectors on fractals in higher dimensions remains largely unexplored in the current literature.
\begin{itemize}

\item Simmons-Weiss  \cite{SW19} showed that, under the open set condition, the natural self-similar measure on any self-similar fractal ensures that almost every point in the fractal is  generic, which means the orbit under the Gauss map\footnote{The Gauss map $T$, which induces the continued fraction expansion, is defined as $Tx=\frac1x \!\!\pmod 1$ and $T0=0$. The Gauss measure $\d\mu=\frac1{\log2}\frac1{1+x}\d x$ is the unique ergodic measure with maximal entropy.} of such a point is equidistributed with respect to the corresponding Gauss measure.\footnote{A sequence $(x_n)_{n\in\N}$ is equidistribution if and only if the sampling measures $\frac1N\sum_{n=0}^{N-1}\delta_{x_n}$ converge weakly to the Lebesgue measure. More generally, if the sampling measures converge to a measure $\mu$, we say that $(x_n)$ is equidistribution with respect to $\mu$. }
 Analogous results for matrix approximation are presented.

\item Let $J\subset \mathbb{R}^{d}$ be the attractor of a finite iterated function system consisting of $n(\ge 2)$ maps of the form $${\bf x}\to A^{-1}{\bf x}+{\bf t}_i (i=1,\ldots,n),$$
where $A$ is an expanding $d\times d$ integral matrix. Under an irrationality condition on the translation parts ${\bf t}_i$, Dayan-Ganguly-Weiss \cite{DGW24} proved that, for any Bernoulli measure on $J$, almost every ${\bf x}\in J$
has an equidistributed orbit $(A^{n}{\bf x})_{n\in\N}.$
\end{itemize}
In both settings, the results rely on measure rigidity theorems for specific random walks on homogeneous spaces.  Theorem \ref{TZ2} establishes  that for any probability measure %supported on $J$
with polylogarithmic Fourier decay,  almost every ${\bf x}$ %\in J$
has an equidistributed orbit $(A_n{\bf x})_{n\in \N}.$
Furthermore, the theorem highlights how Fourier decay serves as a novel analytical method, enabling progress for measures that evade traditional dynamical or geometrical techniques.

\subsubsection{Shrinking targets on fractals}
Shrinking target problems, such as those in the continued fractions system \cite{LWWX14} and the
$\beta$-dynamical system \cite{SW13}, have emerged as central research themes across fractal geometry, metric number theory, and dynamical systems. These problems, which lie at the intersection of measure-theoretic and geometric analysis, continue to attract significant attention for their deep connections to equidistribution, entropy, and fractal dimension. For further information, see \cite{LLVWZ25, LLVZ23}.

The general framework for shrinking target problems in $\mathbb{R}^{d}$ is the following$\colon$
Let $T\colon X\to X$ be a continuous map defined on some Borel set $X\subset \mathbb{R}^{d}.$ Given a point ${\bf y}\in X$ and a sequence  $(E_n)_{n\in\N}$ of Borel subsets of $\mathbb{R}^{d},$ we defined 
$$W({\bf y}, (E_n)):=\{{\bf x}\in X\colon T^{n}({\bf x})\in {\bf y}+E_n \text{ for infinitely many }n\in\N\}.$$
Often $(E_n)_{n\in \N}$ is taken to be a nested sequence of sets containing the origin. Classical shrinking target problems in dynamical systems are primarily concerned with establishing  quantitative properties, such as (Lebesgue) measure and Hausdorff dimension, of the dynamically defined set
  $W({\bf y}, (E_n)).$

Let  $X$ be equipped with a Borel probability measure $\mu$. We investigate the $\mu$ measure of $W({\bf y}, (E_n).$
When the convergence condition $\sum_{n=1}^{\infty}\mu(\{{\bf x}\colon T^{n}({\bf x})\in {\bf y}+E_n \})<\infty$
  holds, the convergent part of the Borel-Cantelli Lemma yields immediately that $\mu(W({\bf y}, (E_n))=0 $.
  The divergence case
 presents a deeper challenge. Under the hypothesis that $T$ exhibits sufficiently rapid mixing (e.g., exponential mixing or polynomial mixing with optimal decay rate) with respect to $\mu$, the dynamical Borel-Cantelli principle asserts that
 $\mu(W({\bf y}, (E_n))=1$.
This dichotomy has been substantiated in diverse dynamical contexts, see for instance \cite{P67, CK01}.

In   recent advancement of the shrinking target theory, Li \emph{et al.} \cite{LLVZ23} revisited the foundational framework established by Hill-Velani \cite{HV99}, where $T: \mathbb{T}^{d}\to \mathbb{T}^{d}$ denotes a toral endomorphism on the  $d$-dimensional torus $\mathbb{T}^{d}:=\mathbb{R}^{d}/\mathbb{Z}^{d}.$ For a large class of subsets $(E_n)_{n\in\N}$ which includes metric balls, rectangular regions and hyperboloidal neighborhoods in algebraic varieties, they established the Lebesgue measure and Hausdorff dimension of $W({\bf y}, (E_n))$.  They obtained the quantitative refinement   beyond the  zero-full law, which significantly extends the original dichotomy in  \cite{HV99}.

\newcommand{\bfx}{\mathbf{x}}
\newcommand{\bfy}{\mathbf{y}}

Motivated by recent advances in the metric theory of Diophantine approximation on fractal sets, we establish new results concerning the shrinking target problem in fractal dynamical systems. Let $\A = (A_n)_{n \in \N}$ be a sequence of expanding integral matrices. The central object of our study is the limsup set defined as
\begin{equation*}
W_{\A}(\bfy, (\mathcal{R}_n)) := \left\{ \bfx \in \T^d : A_n \bfx\in \bfy +\mathcal{R}_n \!\!\!\!\!\pmod{1} \text{ for infinitely many } n \in \N \right\},
\end{equation*}
where $\mathcal{R}_n$ is defined in (\ref{Rn}).

We aim to establish a Khintchine-type theorem analogue concerning the measure-theoretic properties of the intersection $W_{\mathcal{A}}\big({\bf y}, (\mathcal{R}_n)\big)\cap E$, where $E$ denotes a measurable subset of $\mathbb{T}^{d}.$  As an immediate consequence of Theorem \ref{TZ}, we obtain the following result.
 \begin{thm}\label{TZ3}
Let $\mu$ be a probability measure supported on a subset $E$ of $\mathbb{T}^{d}$.
 Assume that the minimal singular values of $A_{n+1}A_{n}^{-1}$ are uniformly bounded below by
$K>1$,  and
there exists $s>d+1$ such that
$$\widehat{\mu}({\bf t})=O\Big((\log |{\bf t}|_{\infty})^{-s}\Big) ~~\text{as}~  |{\bf t}|_{\infty} \to \infty.$$
Then \begin{equation*}
\mu\big(W_{\mathcal{A}}\big({\bf y}, (\mathcal{R}_n)\big)\cap E\big)=\begin{cases}
   0   & \text{if  $\sum_{n=1}^{\infty}r_1(n)\cdots r_{d}(n)<\infty$}, \\
    1  & \text{if  $\sum_{n=1}^{\infty}r_1(n)\cdots r_{d}(n)=\infty$}.
\end{cases}
\end{equation*}
\end{thm}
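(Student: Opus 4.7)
The plan is to deduce Theorem \ref{TZ3} directly from the quantitative counting asymptotic provided by Theorem \ref{TZ}, treating the convergence and divergence regimes separately. Write $F_n := \{\mathbf{x}\in \mathbb{T}^d : A_n\mathbf{x}\in \mathbf{y}+\mathcal{R}_n\pmod 1\}$, so that $W_{\mathcal{A}}(\mathbf{y},(\mathcal{R}_n)) = \limsup_{n\to\infty} F_n$ and $R(\mathbf{x},N) = \sharp\{1\le n\le N : \mathbf{x}\in F_n\}$. Since $\Psi(N) = 2^d\sum_{n=1}^N r_1(n)\cdots r_d(n)$, the convergence (respectively divergence) of the series in the statement is equivalent to $\Psi(N)$ being bounded (respectively tending to infinity) as $N\to\infty$. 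Theorem \ref{TZ} supplies a $\mu$-null set outside of which
\[
\bigl|R(\mathbf{x},N)-\Psi(N)\bigr|\ \le\ C(\mathbf{x})\,\Psi(N)^{d/(d+1)}\bigl(\log \Psi(N)+2\bigr)^{2+\varepsilon}
\]
for all $N\in\mathbb{N}$, where $C(\mathbf{x})<\infty$ is a constant depending only on $\mathbf{x}$.

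In the convergent case $\sum r_1(n)\cdots r_d(n)<\infty$, we have $\Psi(N)\le M:=\Psi(\infty)<\infty$ uniformly in $N$, and the displayed inequality then bounds $R(\mathbf{x},N)$ by a constant depending only on $\mathbf{x}$ and $M$. Consequently, for $\mu$-almost every $\mathbf{x}\in E$ the hitting condition $A_n\mathbf{x}\in\mathbf{y}+\mathcal{R}_n\pmod 1$ holds for only finitely many $n$, so $\mathbf{x}\notin\limsup_n F_n$, which gives $\mu(W_{\mathcal{A}}(\mathbf{y},(\mathcal{R}_n))\cap E)=0$.

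In the divergent case $\Psi(N)\to\infty$, and since $d/(d+1)<1$, the error term is $o(\Psi(N))$; hence $R(\mathbf{x},N) = \Psi(N)(1+o(1))\to\infty$ for $\mu$-almost every $\mathbf{x}\in E$. Such an $\mathbf{x}$ therefore satisfies the hitting condition for infinitely many $n$ and lies in $W_{\mathcal{A}}(\mathbf{y},(\mathcal{R}_n))$; since $\mathrm{supp}(\mu)\subseteq E$ and $\mu$ is a probability measure, we conclude $\mu(W_{\mathcal{A}}(\mathbf{y},(\mathcal{R}_n))\cap E) = \mu(W_{\mathcal{A}}(\mathbf{y},(\mathcal{R}_n))) = 1$. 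The substantive content lies entirely in Theorem \ref{TZ}; once that asymptotic is granted, this corollary is a bookkeeping deduction and I foresee no genuine obstacle at this stage.
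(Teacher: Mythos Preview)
Your proposal is correct and matches the paper's treatment: the paper states Theorem~\ref{TZ3} as ``an immediate consequence of Theorem~\ref{TZ}'' and gives no further argument, and your write-up is precisely the natural unpacking of that phrase. The only minor difference is that, inside the proof of Theorem~\ref{TZ} itself, the paper disposes of the convergent case directly via Borel--Cantelli (Remark~3.5) rather than reading it off the asymptotic formula; but once Theorem~\ref{TZ} is taken as a black box, your deduction for both cases is exactly what is intended.
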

\begin{rem}
Specifically, consider $A_n=A^{n}$ for some $A$. Theorem \ref{TZ3} thereby generalizes the    Lebesgue measure analyses in \cite{LLVZ23} to encompass general fractal measure frameworks.
\end{rem}

\section{Proof of Theorem \ref{TZ2}}
Before presenting the proof of Theorem \ref{TZ2},  we cite an auxiliary lemma \cite[Lemma 1.8]{B12}, which can be regarded as a higher dimensional  Davenport-Erd\H{o}s-Leveque criterion.
\begin{lemma}\label{higher-DEL}
Let $E$ be a subset of $\mathbb{R}^{d}$ and $\mu$ a Borel probability measure on $E.$ Let $(f_n)_{n\in\N}$ be a sequence of bounded measurable functions defined on $E.$ If the series
\begin{equation}\label{condition}\sum_{N=1}^{\infty}\frac{1}{N}\int_{E}\left|\frac{1}{N}\sum_{n=1}^{N}f_n(\bf{x})\right|^{2}{\rm d}\mu(\bf{x})<\infty,
\end{equation}
then for $\mu$-almost all ${\bf{x}}\in E$,
$$\lim_{N\to\infty}\frac{\sum_{n=1}^{N}f_n(\bf{x})}{N}=0.$$
\end{lemma}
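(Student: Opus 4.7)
The plan is to reduce almost-everywhere convergence to a pointwise summability statement via Tonelli, and then upgrade subsequential control to full convergence by exploiting the uniform $L^\infty$-bound on the $f_n$.

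First, write $S_N({\bf x}):=\frac{1}{N}\sum_{n=1}^N f_n({\bf x})$ and $T_N({\bf x}):=N S_N({\bf x})$. Applying Tonelli's theorem to the nonnegative integrand in (\ref{condition}) gives
$$\int_E\sum_{N=1}^\infty\frac{|S_N({\bf x})|^2}{N}\,d\mu({\bf x})<\infty,$$
so there is a $\mu$-conull subset $E_0\subseteq E$ on which $\sum_{N=1}^\infty|S_N({\bf x})|^2/N<\infty$.

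Second, fix ${\bf x}\in E_0$ and proceed by contradiction: if $S_N({\bf x})\not\to 0$, there exist $\varepsilon>0$ and integers $M_j\uparrow\infty$ with $|S_{M_j}({\bf x})|\ge\varepsilon$. Setting $C:=\sup_n\|f_n\|_\infty$ and using the identity
$$N\, S_N({\bf x})=M\, S_M({\bf x})+\sum_{n=M+1}^N f_n({\bf x}),$$
together with the telescoping bound $|\sum_{n=M+1}^N f_n({\bf x})|\le C(N-M)$, one shows that for a small $\delta=\delta(\varepsilon,C)>0$ and every $N\in[M_j,(1+\delta)M_j]$ one has $|S_N({\bf x})|\ge\varepsilon/2$.

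Third, extract a further subsequence $(M_{j_k})$ with $M_{j_{k+1}}\ge(1+\delta)M_{j_k}$, so that the multiplicative windows $[M_{j_k},(1+\delta)M_{j_k}]$ are pairwise disjoint. Using the harmonic estimate $\sum_{N\in[M,(1+\delta)M]}N^{-1}\asymp\delta$ for large $M$, each window contributes at least $c\,\varepsilon^2\delta$ to $\sum_N|S_N({\bf x})|^2/N$ for an absolute $c>0$. Summing over $k$ forces the series to diverge on $E_0$, contradicting Step 1 and completing the proof.

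The main obstacle is the persistence step: converting a single large value $|S_{M_j}({\bf x})|\ge\varepsilon$ into a uniform lower bound on an entire multiplicative window $[M_j,(1+\delta)M_j]$ is what allows the series $\sum_N|S_N|^2/N$ to accumulate $\asymp\varepsilon^2\delta$ of mass around each $M_j$, and this step fundamentally relies on the uniform $L^\infty$-control on the $f_n$. Without such a bound the partial sums $T_N$ could oscillate too quickly between consecutive indices, the mass in each window could collapse, and the contradiction would fail.
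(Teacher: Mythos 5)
The paper does not actually prove this lemma --- it imports it verbatim from Bugeaud \cite[Lemma 1.8]{B12} --- so there is no in-paper argument to compare against. Your proof is correct and complete, provided ``bounded'' is read as \emph{uniformly} bounded, i.e.\ $C:=\sup_n\|f_n\|_\infty<\infty$. This is indeed what the lemma requires (without uniform boundedness the statement is false: take $f_n = n\,a_n-(n-1)\,a_{n-1}$ with a typewriter sequence $(a_n)$ of indicators of intervals of length $(\log n)^{-2}$; then $S_N=a_N\not\to0$ a.e.\ while $\sum_N N^{-1}\int|S_N|^2\,d\mu<\infty$), and it holds in the paper's application where $|f_n|\equiv 1$. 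You correctly flag this as the load-bearing hypothesis. On method: your route --- Tonelli to obtain $\mu$-a.e.\ summability of $\sum_N|S_N(\mathbf{x})|^2/N$, then a pointwise contradiction via persistence of $|S_N|\gtrsim\varepsilon$ over a multiplicative window $[M,(1+\delta)M]$ with $\delta=\delta(\varepsilon,C)$, using $|NS_N-MS_M|\le C(N-M)$ and $\sum_{M\le N\le(1+\delta)M}N^{-1}\asymp\delta$ --- is the standard Gal--Koksma-flavored argument. The other common proof works at the $L^2$ level: extract a geometric subsequence $N_k$ (one per $(1+\eta)$-block) with $\sum_k\int|S_{N_k}|^2\,d\mu<\infty$, apply Chebyshev and Borel--Cantelli to get $S_{N_k}\to0$ a.e., interpolate between consecutive $N_k$ using the same uniform bound, and let $\eta\to0$ over rationals. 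The two are essentially equivalent; your pointwise version is marginally cleaner since the window length adapts to the threshold $\varepsilon$ from the outset rather than carrying an auxiliary parameter $\eta$. The only cosmetic slips are that you should take $M_{j_{k+1}}>(1+\delta)M_{j_k}$ strictly for genuine disjointness of the windows, and the persistence constant is more naturally something like $\varepsilon/4$; neither affects the argument.
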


Let $f_n({\bf{x}})=e(-\langle{\bf k},A_n{\bf x}\rangle)$ for $n\in \N.$ Then by Weyl's equidistribution criterion (\ref{DEL}) and Lemma \ref{higher-DEL}, it remains to verify that (\ref{condition}) holds. Fix ${\bf k}\in\mathbb{Z}^{d}\setminus \{\bf 0\}$ and $N\in \N.$ We deduce that
\begin{equation}\label{check}
\begin{split}
\frac{1}{N}\int_{E}\left|\frac{1}{N}\sum_{n=1}^{N}f_n(\bf{x})\right|^{2}{\rm d}\mu({\bf{x}})
&=\frac{1}{N^{3}}\sum_{n,m=1}^{N}\int_{E}e\big(\big\langle{\bf k},(A_n-A_m){\bf x}\big\rangle\big){\rm d}\mu({\bf{x}})\\
&=\frac{1}{N^{2}}+\frac{2}{N^{3}}\sum_{1\le m< n\le N}\widehat{\mu}((A_n-A_m)^{T}({\bf k})).
\end{split}
\end{equation}
Since $|\!|A({\bf k})|\!|_{2}\ge \sigma(A)|\!|{\bf k}|\!|_{2}\ge \sigma(A)$, we have
\begin{align*}
|\!|(A_n-A_m)^{T}({\bf k})|\!|_{2}&=|\!|A_m^{T}(A_nA_m^{-1}-I)^{T}({\bf k})|\!|_{2}
\\&\ge \sigma(A_m)|\!|(A_nA_m^{-1}-I)^{T}({\bf k})|\!|_{2}\\
&\ge  \sigma(A_m)\sigma(A_nA_m^{-1}-I)\ge K^{m-1}(K^{n-m}-1),
\end{align*}
where the last inequality follows by the assumptions of Theorem \ref{TZ2}. Since $K>1,$ there exists $m_0\in\N$ such that, whenever $n>m\ge m_0,$ we get
$$K^{m-1}\ge 1+\frac{1}{K-1}\ge 1+\frac{1}{K^{n-m}-1}=\frac{K^{n-m}}{K^{n-m}-1},$$
and thus
$$|\!|(A_n-A_m)^{T}({\bf k})|\!|_{2}\ge K^{n-m}.$$
By the Fourier decay $|\widehat{\mu}({\bf t})|=O((\log|{\bf t}|_{\infty})^{-\alpha})$, we have
\begin{align*}
 \text{r.h.s of } (\ref{check})&\le \frac{1}{N^{2}}+\frac{2m_0}{N^{2}}+\frac{2}{N^{3}}\sum_{n=1}^{N}\sum_{m=m_0}^{n-1}\widehat{\mu}((A_n-A_m)^{T}({\bf k}))\\
 &\ll \frac{2m_0+1}{N^{2}}+\frac{2}{N^{3}}\sum_{n=1}^{N}\sum_{m=m_0}^{n-1}(\log|\!|(A_n-A_m)^{T}({\bf k})|\!|_{2})^{-\alpha}\\
 &\le \frac{2m_0+1}{N^{2}}+\frac{2(\log K)^{-\alpha}}{N^{3}}\sum_{n=1}^{N}\sum_{m=m_0}^{n-1}\frac{1}{(n-m)^{\alpha}}\\
 &=O(\frac{1}{N^{2}}+\frac{1}{N^{1+\alpha}}).
\end{align*}
Since $\alpha>0,$ the series
$$\sum_{N=1}^{\infty}\frac{1}{N}\int_{E}\left|\frac{1}{N}\sum_{n=1}^{N}f_n(\bf{x})\right|^{2}{\rm d}\mu({\bf{x}})<\infty,$$
which completes the proof.

\section{Proof of Theorem \ref{TZ}}
Given ${\bf y}\in[0,1)^{d},$ and an  expanding matrix $A$.  Let
\begin{equation}\label{Eny}
	E_{n}^{\bf y}=E_{n}^{\bf y}({\bf r}):=\{{\bf x}\in [0,1)^{d}\colon A_{n}{\bf x}\in {\bf y}+\mathcal{R}({\bf r}(n))\!\!\!\!\pmod 1 \}.
\end{equation}

Then we have
$$R({\bf x},N):=\sharp\{n\in[1,N]\colon {\bf x}\in E_{n}^{\bf y}\}.$$
Thus, the sets $E_{n}^{\bf y}$ serve as the fundamental building blocks  of the structures studied in this paper. Let $\mu$ be a probability measure supported on a subset $E\subset [0,1)^{d}$ with $|\widehat{\mu}({\bf{t}})|\ll (\log |{\bf{t}}|_{\infty})^{-s}$ for  $s>d+1.$ We now proceed to estimate the $\mu$-measure of these building blocks.

The core idea originates  from Pollington-Velani \cite{PV00}, which has since been developed by several authors \cite{PVZZ22, PZ24} in the setting of $\mathbb{R}$. Due to $q^{-1}$-periodicity of the arcs manifests, their proof rely crucially on the property of $q$-divisibility. A natural question arises: does an analogous property holds in the more general Fourier space  $\mathbb{Z}^{d}$. We resolve the challenges arising from arbitrary (non-diagonal) matrices $A$ and establish the $A$-divisibility in $\mathbb{Z}^{d}$, a key ingredient in our proof.

\subsection{$A$-divisibility}

%\begin{de}
Let $\Gamma\subset\mathbb{R}^{d}$  be a overlattice of $\mathbb{Z}^d$ (i.e., $\Gamma$ is a discrete additive subgroup satisfying $\Gamma\supset \mathbb{Z}^d$).
To $\Gamma$, we associate the dual lattice
$$\Gamma^{\ast}=\{{\bf k}\in \mathbb{Z}^{d}\colon \langle {\bf k},{\bf x} \rangle\in \mathbb{Z}  \text{ for any } {\bf x}\in \Gamma\}.$$

The (higher dimensional) exponential sum   over $\Gamma$ is defined as$\colon$ for ${\bf k}\in \mathbb{Z}^{d},$
$$S({\bf k}):=\sum_{{\bf x}\in \Gamma/\mathbb{Z}^d} e(-\langle {\bf k},{\bf x} \rangle).$$
Here, since ${\bf k}\in \mathbb{Z}^{d}$,
the exponential term $e(-\langle {\bf k},{\bf x} \rangle)$  for
${\bf x}\in \Gamma/\mathbb{Z}^d$ is well-defined because its value is independent of the choice of representatives in the quotient $\Gamma/\mathbb{Z}^d$.

\begin{lemma}\label{dic}
The exponential sum $S({\bf k})$ satisfies the following dichotomy$\colon$
\begin{equation*}
S({\bf k})=\begin{cases}
  [\Gamma:\mathbb{Z}^d]  & \text{if } {\bf k}\in \Gamma^{\ast}, \\
  \mbox{}\quad  0   & \text{otherwise},
\end{cases}
\end{equation*}
where  $[\Gamma: \mathbb{Z}^d]$ is the index of $\mathbb{Z}^d$ in $\Gamma$
(as a subgroup of $\Gamma$).
\end{lemma}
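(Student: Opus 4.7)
The statement is essentially the classical orthogonality relation for characters of the finite abelian group $G := \Gamma/\mathbb{Z}^d$, so my plan is to reduce it to that framework and then apply the standard ``trivial vs.\ non-trivial character'' dichotomy.

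First I would verify the two structural points needed for the reduction. Since $\Gamma$ is a discrete subgroup of $\R^d$ containing $\Z^d$ as a full-rank sublattice, the quotient $G = \Gamma/\Z^d$ is a \emph{finite} abelian group whose order equals the index $[\Gamma:\Z^d]$. Next, for fixed $\mathbf{k}\in\Z^d$, I would check that the map
\[
\chi_{\mathbf{k}}:\ G\longrightarrow \C^{\ast},\qquad \chi_{\mathbf{k}}(\mathbf{x}+\Z^d):=e(-\langle\mathbf{k},\mathbf{x}\rangle),
\]
is well-defined and a group homomorphism. Well-definedness follows because if $\mathbf{x}-\mathbf{x}'\in\Z^d$ then $\langle\mathbf{k},\mathbf{x}-\mathbf{x}'\rangle\in\Z$, so $e(-\langle\mathbf{k},\mathbf{x}-\mathbf{x}'\rangle)=1$; the homomorphism property is immediate from bilinearity of $\langle\cdot,\cdot\rangle$. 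Thus $\chi_{\mathbf{k}}$ is a character of $G$, and $S(\mathbf{k})=\sum_{g\in G}\chi_{\mathbf{k}}(g)$.

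The remaining step is the dichotomy. By definition, $\chi_{\mathbf{k}}$ is the trivial character on $G$ if and only if $\langle\mathbf{k},\mathbf{x}\rangle\in\Z$ for every $\mathbf{x}\in\Gamma$, which is precisely the condition $\mathbf{k}\in\Gamma^{\ast}$. In the trivial case each summand equals $1$ and the sum collapses to $|G|=[\Gamma:\Z^d]$. In the non-trivial case I would pick some $\mathbf{x}_0\in\Gamma/\Z^d$ with $\chi_{\mathbf{k}}(\mathbf{x}_0)\neq 1$ and exploit the translation invariance of the sum over the group:
\[
S(\mathbf{k})=\sum_{\mathbf{x}\in G}\chi_{\mathbf{k}}(\mathbf{x}+\mathbf{x}_0)=\chi_{\mathbf{k}}(\mathbf{x}_0)\,S(\mathbf{k}),
\]
which forces $S(\mathbf{k})=0$ since $1-\chi_{\mathbf{k}}(\mathbf{x}_0)\neq 0$.

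There is no real obstacle here; the only point demanding care is that the summation is over the \emph{quotient} $\Gamma/\Z^d$, not over $\Gamma$ itself, and that it is ${\mathbf k}\in\Z^d$ (rather than an arbitrary real vector) that makes the summand descend to $G$. Once these bookkeeping items are in place, the lemma is simply the statement that the characters of a finite abelian group are mutually orthogonal.
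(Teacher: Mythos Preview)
Your proof is correct and matches the paper's argument: both reduce to the character-sum identity on the finite group $\Gamma/\Z^d$, handling the $\mathbf{k}\in\Gamma^\ast$ case trivially and, for $\mathbf{k}\notin\Gamma^\ast$, choosing $\mathbf{x}_0$ with $e(-\langle\mathbf{k},\mathbf{x}_0\rangle)\neq 1$ and using translation invariance of the sum to force $S(\mathbf{k})=0$. The only cosmetic difference is that you explicitly frame things in terms of characters $\chi_{\mathbf{k}}$, whereas the paper works directly with a complete set of coset representatives.
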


\begin{proof}
By the definition of the dual lattice, if ${\bf k}\in \Gamma^{\ast}$ and $\bfx\in\Gamma$,  we have $e(-\langle {\bf k},{\bf x} \rangle)=1$, and thus $S({\bf k})=[\Gamma:\mathbb{Z}^d].$

If ${\bf k}\notin \Gamma^{\ast},$ there exists a element ${\bf x}_0\in\Gamma$ such that $\langle {\bf k},{\bf x}_0 \rangle \notin \mathbb{Z},$ which yields that $e(-\langle {\bf k},{\bf x}_0 \rangle)\neq 1.$
Choose a complete system of representatives $\{{\bf x}_1,\bfx_2, \ldots,{\bf x}_N\}$ $(N= [\Gamma:\mathbb{Z}^d]   )$ for the quotient group $\Gamma /\mathbb{Z}^d$. It follows that $S({\bf k})=\sum_{i=1}^N e(-\langle {\bf k},{\bf x}_i \rangle)$. We readily check that  $\{\bfx_0+{\bf x}_1,\bfx_0+\bfx_2, \ldots,\bfx_0+{\bf x}_N\}$ is also a complete system of representatives for the quotient, and thus
$$S({\bf k})=\sum_{i=1}^N e(-\langle {\bf k},\bfx_0+{\bf x}_i \rangle)=e(-\langle {\bf k},\bfx_0 \rangle)\sum_{i=1}^N e(-\langle {\bf k}, {\bf x}_i \rangle)=e(-\langle {\bf k},\bfx_0 \rangle)S({\bf k}).$$
Since $e(-\langle {\bf k},{\bf x}_0 \rangle)\neq 1$, we deduce that $S(\bf k)=0$, as desired.
\end{proof}

As a direct corollary of Lemma \ref{dic}, we establish
$A$-invariance on the $d$-dimensional torus $\mathbb{R}^{d}/\mathbb{Z}^{d}$, which is analogous to $q$-invariance on the unit circle.

\begin{lemma}\label{A-inv}
Let $A$ be a nonsingular integral matrix. Write
$$\mathcal{P}=\{{\bf p}\in\mathbb{Z}^{d} \colon {\bf p}=A{\bf x} \text{ for some } {\bf x}\in [0,1)^{d}\}.$$
 Then we have
 \begin{equation*}
\sum_{p\in\mathcal{P}}e(-\langle {\bf k},A^{-1}{\bf p} \rangle)=\begin{cases}
 |\det A|   & \text{if } (A^{T})^{-1}{\bf k}\in \mathbb{Z}^{d}, \\
   ~~ 0    & \text{otherwise}.
\end{cases}
\end{equation*}
\end{lemma}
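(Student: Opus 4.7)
The plan is to reduce this statement to Lemma \ref{dic} by choosing the right overlattice $\Gamma$. The natural candidate is
$$\Gamma := A^{-1}\mathbb{Z}^d = \{A^{-1}{\bf n} : {\bf n} \in \mathbb{Z}^d\}.$$
Since $A$ has integer entries, $A\mathbb{Z}^d \subseteq \mathbb{Z}^d$, equivalently $\mathbb{Z}^d \subseteq A^{-1}\mathbb{Z}^d = \Gamma$, so $\Gamma$ is an overlattice of $\mathbb{Z}^d$ with index $[\Gamma:\mathbb{Z}^d] = |\det A|$ (a standard lattice covolume computation).

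Next I would identify the dual lattice of $\Gamma$. A vector ${\bf k}\in \mathbb{Z}^d$ lies in $\Gamma^{\ast}$ precisely when $\langle {\bf k}, A^{-1}{\bf n}\rangle = \langle (A^T)^{-1}{\bf k}, {\bf n}\rangle \in \mathbb{Z}$ for every ${\bf n}\in \mathbb{Z}^d$, i.e.\ when $(A^T)^{-1}{\bf k}\in \mathbb{Z}^d$. This matches the dichotomy condition stated in the lemma.

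The core geometric step I expect to be the main (mild) obstacle is showing that the map ${\bf p}\mapsto A^{-1}{\bf p}$ is a bijection between $\mathcal{P}$ and a complete system of representatives of $\Gamma/\mathbb{Z}^d$. On the one hand, each $A^{-1}{\bf p}$ with ${\bf p}\in\mathcal{P}$ lies in $[0,1)^d\cap \Gamma$, and distinct ${\bf p}\in\mathcal{P}$ yield distinct representatives (two elements of $[0,1)^d$ differing by an integer vector must coincide, since $[0,1)^d$ is a fundamental domain for $\mathbb{Z}^d$). Conversely, every coset of $\Gamma/\mathbb{Z}^d$ has exactly one representative in $[0,1)^d$, and any such representative ${\bf x}=A^{-1}{\bf p}$ with ${\bf p}\in \mathbb{Z}^d$ forces ${\bf p}=A{\bf x}\in A[0,1)^d\cap \mathbb{Z}^d = \mathcal{P}$. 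Thus
$$\sum_{{\bf p}\in \mathcal{P}} e(-\langle {\bf k}, A^{-1}{\bf p}\rangle) = \sum_{{\bf x}\in \Gamma/\mathbb{Z}^d} e(-\langle {\bf k}, {\bf x}\rangle) = S({\bf k}).$$

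Finally I would invoke Lemma \ref{dic}: the right-hand side equals $[\Gamma:\mathbb{Z}^d] = |\det A|$ when ${\bf k}\in \Gamma^{\ast}$, i.e.\ when $(A^T)^{-1}{\bf k}\in \mathbb{Z}^d$, and vanishes otherwise. This is precisely the dichotomy of the lemma, completing the proof.
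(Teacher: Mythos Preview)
Your proof is correct and follows essentially the same route as the paper: set $\Gamma=A^{-1}\mathbb{Z}^d$, identify $\Gamma^{\ast}$ with $\{\mathbf{k}:(A^T)^{-1}\mathbf{k}\in\mathbb{Z}^d\}$, observe that $\{A^{-1}\mathbf{p}:\mathbf{p}\in\mathcal{P}\}$ is a complete set of coset representatives for $\Gamma/\mathbb{Z}^d$ (with $\sharp\mathcal{P}=[\Gamma:\mathbb{Z}^d]=|\det A|$), and apply Lemma~\ref{dic}. If anything, your justification of the bijection step is slightly more detailed than the paper's.
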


\begin{proof}
Take $\Gamma=A^{-1}\mathbb{Z}^{d}.$ Since $A$ is a nonsingular integral matrix, we readily check that $\Gamma$ is an overlattice of $\mathbb{Z}^d$, and  its dual lattice  is
\begin{align*}
\Gamma^{\ast}&=\{{\bf k}\in\mathbb{Z}^{d}\colon \langle {\bf k},A^{-1}{\bf p} \rangle\in \mathbb{Z} \text{ for any }   {\bf p}\in \mathbb{Z}^{d}\}\\
&=\{{\bf k}\in\mathbb{Z}^{d}\colon \langle (A^{T})^{-1}{\bf k},{\bf p} \rangle\in \mathbb{Z} \text{ for any } {\bf p}\in \mathbb{Z}^{d}\}.
\end{align*}
It follows that  ${\bf k}\in \Gamma^{\ast}$ if and only if $(A^{T})^{-1}{\bf k}\in\mathbb{Z}^{d}.$

Moreover,  when ${\bf p}$
 runs over $\mathcal{P}$, the elements $A^{-1}{\bf p}$ form a complete set of coset representatives for $\Gamma/\mathbb{Z}^d$. Thus it  is clear that
$$\sharp \mathcal{P}=[\Gamma:\mathbb{Z}^d]=|\det A|.$$
Therefore, by Lemma \ref{dic}, we deduce that
 \begin{equation*}
\sum_{p\in\mathcal{P}}e(-\langle {\bf k},A^{-1}{\bf p} \rangle)
=\sum_{{\bf x}\in \Gamma/\mathbb{Z}^{d}}e(-\langle {\bf k},{\bf x} \rangle)=\begin{cases}
   |\det A|  & \text{if } {\bf k}\in \Gamma^{\ast}, \\
  ~~  0,  & \text{otherwise}.
\end{cases}
\end{equation*}
\end{proof}

\subsection{Warm-up}

The following result \cite[Theorem 3]{P67} (or \cite[Lemma~1.5]{H98}) constitutes a cornerstone of the metric theory of Diophantine approximation, providing a powerful framework for deriving precise counting estimates in asymptotic analyses. Originating from the classical variance method in probability theory, this result can be regarded as a quantitative refinement of the (divergence) Borel-Cantelli Lemma \cite[Lemma~2.2]{BRV16}.

\begin{lemma}\label{Philipp}
Let $(X,\mathcal{B},\mu)$ be a probability space, let $(f_n(x))_{n\in \N}$ be a sequence of non-negative $\mu$-measurable functions defined
on $X,$ and let $(f_n)_{n\in \N},$ $(\phi_n)_{n\in \N}$ be sequences of reals such that
$$0\le f_n\le \phi_n \ \ \ (n=1,2,\ldots).$$
Suppose that for arbitrary $a, b\in \N$ with $a<b,$ we have
$$\int_{X}\Big(\sum_{n=a}^{b}(f_n(x)-f_n)\Big)^{2}\d\mu(x)\le C\sum_{n=a}^{b}\phi_n$$
with  $C>0$ an absolute constant. Then, for any given $\varepsilon>0,$
$$\sum_{n=1}^{N}f_n(x)=\sum_{n=1}^{N}f_n+O\Big(\Psi(N)^{\frac{1}{2}}(\log\Psi(N))^{\frac{3}{2}+\varepsilon}+\max_{1\le n\le N}f_n\Big)$$
for $\mu$-almost all $x\in X,$ where $\Psi(N)=\sum_{n=1}^{N}\phi_n.$
\end{lemma}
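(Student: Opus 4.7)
The plan is to prove this classical variance-to-a.e.\ statement (a Gál--Koksma / Philipp-type lemma) via a standard two-step recipe: establish the claimed estimate along a geometrically spaced subsequence $(N_k)$ by Chebyshev plus the first Borel--Cantelli lemma, then interpolate to arbitrary $N$ with a Rademacher--Menshov-type maximal inequality that one extracts from the block-variance hypothesis. Throughout, set $T_N(x):=\sum_{n=1}^{N}(f_n(x)-f_n)$ and $\psi(N):=\Psi(N)^{1/2}(\log\Psi(N))^{3/2+\varepsilon}$; we may assume $\Psi(N)\to\infty$, since otherwise $0\le f_n\le\phi_n$ forces $|T_N|\le 2\Psi(N)$ to be bounded and $\max_{1\le n\le N}f_n$ to be bounded, making the statement trivial.

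For the subsequence step I would take $N_k$ minimal with $\Psi(N_k)\ge 2^{k}$. Applying the hypothesis with $a=1$, $b=N_k$ and Chebyshev gives
\begin{equation*}
\mu\bigl(|T_{N_k}|>\psi(N_k)\bigr)\le\frac{C\,\Psi(N_k)}{\psi(N_k)^{2}}=\frac{C}{(\log\Psi(N_k))^{3+2\varepsilon}}\ll\frac{1}{k^{3+2\varepsilon}},
\end{equation*}
which is summable, so Borel--Cantelli yields $|T_{N_k}(x)|\le\psi(N_k)$ for $\mu$-a.e.\ $x$ and all $k$ sufficiently large. For the interpolation, fix $N\in(N_k,N_{k+1}]$ and decompose $T_N(x)=T_{N_k}(x)+(T_N(x)-T_{N_k}(x))$, so the task reduces to controlling the block maximum $M_k(x):=\max_{N_k<N\le N_{k+1}}|T_N(x)-T_{N_k}(x)|$. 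By recursively halving $(N_k,N_{k+1}]$, applying the hypothesis on each subinterval, and combining with Cauchy--Schwarz (the classical Rademacher--Menshov argument), one obtains
\begin{equation*}
\int_{X} M_k(x)^{2}\,\d\mu(x)\le C_{1}\bigl(\log(2(N_{k+1}-N_k))\bigr)^{2}\bigl(\Psi(N_{k+1})-\Psi(N_k)\bigr).
\end{equation*}
Since $\Psi(N_{k+1})-\Psi(N_k)\le 2^{k}+\phi_{N_{k+1}}$ and, in the generic case, $\log(N_{k+1}-N_k)\lesssim\log\Psi(N_k)\asymp k$, a second Chebyshev plus Borel--Cantelli step delivers $M_k(x)\le\psi(N_k)$ for $\mu$-a.e.\ $x$ and all large $k$. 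Summing the subsequence bound and the block bound produces the claimed asymptotic, with any residual $\phi_{N_{k+1}}$ coming from the boundary term absorbed into $\max_{1\le n\le N}f_n$.

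The main obstacle will be the Rademacher--Menshov step: the hypothesis controls only the $L^{2}$-norm of the centred sum over a single block, whereas the interpolation requires a uniform bound on the maximum of partial sums inside the block. The dyadic splitting trick promotes the former to the latter at the cost of a single extra logarithmic factor, which is precisely what pushes the exponent of $\log\Psi(N)$ from the $1+\varepsilon/2$ that pure Chebyshev would give to the final $3/2+\varepsilon$. A second delicate point is the non-uniform case where a single weight $\phi_{n_0}$ is much larger than the rest: then $\log(N_{k+1}-N_k)$ need not be comparable to $\log\Psi(N_k)$, and one cannot rule out an isolated jump of size $\phi_{n_0}$ between consecutive $T_N$'s. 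The $\max_{1\le n\le N}f_n$ term on the right-hand side of the conclusion is exactly the price paid for absorbing such pathological jumps, and together with the two Borel--Cantelli applications above it completes the proof.
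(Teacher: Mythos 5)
The paper does not prove this lemma; it cites it directly from Philipp \cite[Theorem~3]{P67} and Harman \cite[Lemma~1.5]{H98}, so there is no ``paper's proof'' to compare against. I therefore evaluate your proposal on its own terms.

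Your skeleton is the standard Gál--Koksma one --- endpoints along a $\Psi$-geometric subsequence $(N_k)$ via Chebyshev and Borel--Cantelli, then chaining inside each block $(N_k,N_{k+1}]$ --- and the endpoint step is correct. The gap is in the chaining step. You dyadically halve the \emph{index} interval $(N_k,N_{k+1}]$, and Rademacher--Menshov then pays a factor $\big(\log(N_{k+1}-N_k)\big)^2$; this is not comparable to $k^2 \asymp (\log\Psi(N_k))^2$ in general, and you have the non-generic regime backwards. A single large $\phi_{n_0}$ makes $N_{k+1}-N_k$ \emph{smaller}, not larger, and is harmless. The dangerous case is slowly accumulating weights, e.g.\ $\phi_n = 1/(n\log n)$: then $\Psi(N)\sim\log\log N\to\infty$, $N_k\sim\exp\exp 2^k$, and $\log(N_{k+1}-N_k)\sim\exp 2^{k+1}$, which is doubly exponentially larger than $k$, so your second Chebyshev/Borel--Cantelli step collapses. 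The $\max_{n\le N}f_n$ term cannot rescue this: there $f_n\le\phi_n\to 0$, so $\max f_n=O(1)$, while the target error $\Psi(N)^{1/2}(\log\Psi(N))^{3/2+\varepsilon}\to\infty$.

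The fix --- and this is the idea missing from your write-up --- is to make the dyadic tree adaptive to $\Psi$, not to the index. At level $j$, choose cut-points inside $(N_k,N_{k+1}]$ so that each of the $2^j$ sub-blocks carries $\Psi$-mass roughly $2^{k-j}$, up to a boundary term of at most one $\phi_n$. The tree then has depth $\asymp k$ regardless of $N_{k+1}-N_k$. At the finest level, a block $(a,b]$ has $\Psi(b)-\Psi(a)=O(1+\max_n\phi_n)$, and here you invoke non-negativity of the $f_n(x)$ together with $f_n\le\phi_n$: for any $N\in(a,b]$,
$$
|T_N(x)-T_a(x)|\;\le\;|T_b(x)-T_a(x)|+\sum_{a<n\le b}f_n\;\le\;|T_b(x)-T_a(x)|+\Psi(b)-\Psi(a),
$$
so the interior is controlled by the endpoint difference plus $O(1+\max\phi_n)$. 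This monotonicity step is what decouples the estimate from $N_{k+1}-N_k$ and is also where the $\max f_n$ term genuinely enters (handling the case of an oversized individual $\phi_{n_0}$ that overshoots the dyadic $\Psi$-targets). With the $\Psi$-adapted tree and Cauchy--Schwarz over the $O(k)$ levels one gets $\int M_k^2\,\d\mu\ll k^2\,2^k$, after which your second Chebyshev/Borel--Cantelli step does close and the exponent $3/2+\varepsilon$ comes out as you computed. As written, though, your interpolation step does not establish the lemma.
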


Apply Lemma \ref{Philipp} with
$$X=[0,1)^{d}, f_n({\bf x})=\mathbb{I}_{E_{n}^{\bf y}}({\bf x}) \ \text{and} \  f_n=\psi(n):=2^{d}r_1(n)\cdots r_{d}(n),$$
where $\mathbb{I}_{E_{n}^{\bf y}}$ is the indicator function of the set $E_{n}^{\bf y}$ defined in (\ref{Eny}).
Hence
$$R({\bf x},N)=\sum_{n=1}^{N}f_{n}(x).$$
For $a, b\in \N$ with $a<b$, we calculate  that
\begin{align*}
&\Big(\sum_{n=a}^{b}(f_n({\bf x})-f_n)\Big)^{2}
\\&=\sum_{n=a}^{b}f_n({\bf x})+2\sum_{a\le m<n\le b}f_m({\bf x})f_n({\bf x})+\Big(\sum_{n=a}^{b}f_n\Big)^{2}-2\sum_{n=a}^{b}f_n\cdot\sum_{n=a}^{b}f_n({\bf x}),
\end{align*}
and thus
\begin{equation}\label{expect}
\begin{split}
&\int_X\Big(\sum_{n=a}^{b}(f_n({\bf x})-f_n)\Big)^{2}\d\mu({\bf x})
\\&=\sum_{n=a}^{b}\mu(E_{n}^{\bf y})
+2\sum_{a\le m<n\le b}\mu(E_{m}^{\bf y}\cap E_{n}^{\bf y})
-2\sum_{n=a}^{b}\psi(n)\Big(\sum_{n=a}^{b}\mu(E_{n}^{\bf y})-\frac{1}{2}\sum_{n=a}^{b}\psi(n)\Big).
\end{split}
\end{equation}

\subsection{Expectation transference principle}
In this subsection,  we first construct continuous     upper and lower bounding functions for the indicator function $\mathbb{I}_{E_{n}^{\bf y}}({\bf x})$, and then apply Fourier analysis to estimate the measure $\mu(E_{n}^{\bf y})$. Finally we derive the following expectation transference principle relating  the $\mu$-measures to Lebesgue measures:
$$\sum_{n=a}^{b}\mu(E_{n}^{\bf y})=\sum_{n=a}^{b}\mathcal{L}(E_{n}^{\bf y})+O(1).$$

For $n\in \mathbb{N}$, let $\varepsilon(n)\in(0,1]$ be a real number; we will specify its value later.
For $1\le i\le d,$ we define the upper step function  $\mathcal{X}_{r_i(n)}^{+}\colon [-1,1]\to \mathbb{R}$ by
\begin{equation*}
\mathcal{X}_{r_i(n)}^{+}(x)=
\begin{cases}
   1,  & \text{if } |x|\le r_i(n), \\
   1+\frac{1}{r_i(n)\varepsilon(n)}(r_i(n)-|x|), & \text{if } r_i(n)<|x|\le r_i(n)+ r_i(n)\varepsilon(n), \\
    0,  & \text{otherwise}.
\end{cases}
\end{equation*}
Correspondingly,  the lower step function is defined by
\begin{equation*}
\mathcal{X}_{r_i(n)}^{-}(x)=
\begin{cases}
   1,  & \text{if } |x|\le (1-\varepsilon(n))r_i(n), \\
   \frac{1}{r_i(n)\varepsilon(n)}(r_i(n)-|x|), & \text{if } (1-\varepsilon(n))r_i(n)<|x|\le r_i(n), \\
    0,  & \text{otherwise}.
\end{cases}
\end{equation*}
%For ${\bf x}\in [-1,1]^{d},$ set $\mathcal{X}_{{\bf r}}^{+}({\bf x})=\prod_{i=1}^{d}\mathcal{X}_{r_i(n)}^{+}(x_i).$ Further, the continuous upper and lower approximations of
Putting $\mathcal{X}_{{\bf r}}^{+}({\bf x})=\prod_{i=1}^{d}\mathcal{X}_{r_i(n)}^{+}(x_i)$, we define the continuous upper approximation of
$\mathbb{I}_{E_{n}^{\bf y}}({\bf x})$ by
\begin{equation*}%\label{ua}
g_{n}^{+}({\bf x}):=\Big(\sum_{{\bf p}\in \mathcal{P}_n}\delta_{A_n^{-1}({\bf p}+{\bf y})}({\bf x})\Big)\ast\mathcal{X}_{{\bf r}}^{+}({\bf x})=\sum_{{\bf p}\in \mathcal{P}_n}\mathcal{X}_{{\bf r}}^{+}(A_n{\bf x}-{\bf p}-{\bf y}),
\end{equation*}
where
$$\mathcal{P}_n=\{{\bf p}\in \mathbb{Z}^{d}\colon {\bf p}=A_n{\bf x} \text{ for some } {\bf x}\in [0,1)^{d}\},$$ and %as usual $\ast$ denotes convolution and
$\delta_{{\bf x}}$ denotes the Dirac delta-function at the point ${\bf x}.$  The lower approximation $g_n^-$ is defined analogously by replacing $\mathcal{X}_{r_i(n)}^{+}$ with $\mathcal{X}_{r_i(n)}^{-}$.
 It follows that
\begin{equation}\label{trans0}
\int_{X}g_{n}^{-}({\bf x}){\rm d}\mu({\bf x})\le \mu(E_{n}^{\bf y})\le \int_{X}g_{n}^{+}({\bf x}){\rm d}\mu({\bf x}).
\end{equation}
To evaluate the integrals, we express the integrands in terms of theirs Fourier series. For this purposes, we extend   $g_{n}^{+}({\bf x})$ and $g_{n}^{-}({\bf x})$ periodically to all of $\mathbb{R}^d$ be defining
$$h_{n}^{+}({\bf x})=\sum_{{\bf m}\in\mathbb{Z}^{d}}g_{n}^{+}({\bf x}+{\bf m}) \text{ and } h_{n}^{-}({\bf x})=\sum_{{\bf m}\in\mathbb{Z}^{d}}g_{n}^{-}({\bf x}+{\bf m}).$$

The following lemma calculates the Fourier coefficients of the functions $h_{n}^{\pm}$. % and $h_{n}^{-}$.% at integer vectors.
In subsequent mathematical expressions, the $\pm$ symbol   indicates dual validity for   both the $+$ and $-$ cases (generally, multiple $\pm$ signs within a single formula must uniformly adopt either $+$ or $-$). Additionally the superscript notation ${}^*$   signifies taking the maximum between the ${}^+$ and ${}^-$ instances, for example, $|\widehat{h}_m^{*}\widehat{h}_n^{*} |$ is interpreted as $ \max\{|\widehat{h}_m^{+}\widehat{h}_n^{+}|, |\widehat{h}_m^{-}\widehat{h}_n^{-}|\}$.

\begin{lemma}\label{fc}
Let ${\bf k}=(k_1,\ldots,k_d)\in\mathbb{Z}^{d},$ we have$\colon$

(1) For  ${\bf k}={\bf 0},$
$\widehat{h}_n^{\pm}({\bf 0})=\psi(n)\cdot\big(1\pm\frac{\varepsilon(n)}{2}\big)^{d}. $ %, \widehat{h}_n^{-}({\bf 0})=\psi(n)\cdot\big(1-\frac{\varepsilon(n)}{2}\big)^{d}.$ $

(2) For ${\bf k}\neq{\bf 0},$ %we further divide it into two cases.
\begin{itemize}
\item If $(A_n^{T})^{-1}{\bf k}\notin \mathbb{Z}^{d},$ $\widehat{h}_n^{\pm}({\bf k})=%\widehat{h}_n^{-}({\bf k})=
    0.$

\item If $(A_n^{T})^{-1}{\bf k}\in \mathbb{Z}^{d},$  % we have ${\bf k}=A_n^{T}{\bf k'}$ for some ${\bf k'}=(k_1',\ldots,k_d')\in \mathbb{Z}^{d}.$ Then
$$ |\widehat{h}_n^{\pm}({\bf k})|\ll \prod_{i=1}^{d}\min\left\{r_{i}(n), \frac{1}{|k'_{i}|^{2}r_i(n)\varepsilon(n)}\right\}.$$
Here, $(A_n^{T})^{-1}{\bf k}={\bf k'} =(k_1',\ldots,k_d')\in \mathbb{Z}^{d}$, and we adopt the convention that $\frac{1}{0}=+\infty.$
\end{itemize}

\end{lemma}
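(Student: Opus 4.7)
The plan is to compute $\widehat{h}_n^{\pm}({\bf k})$ explicitly by unfolding the $\mathbb{Z}^d$-periodization and then applying the $A$-divisibility identity of Lemma \ref{A-inv}. Since $h_n^{\pm}$ is the $\mathbb{Z}^d$-periodization of $g_n^{\pm}$, a standard unfolding argument gives
$$\widehat{h}_n^{\pm}({\bf k}) = \int_{\R^d} g_n^{\pm}({\bf x})\, e(-\langle {\bf k},{\bf x}\rangle)\,{\rm d}{\bf x}.$$
Substituting the definition of $g_n^{\pm}$ and making the change of variables ${\bf u} = A_n{\bf x}-{\bf p}-{\bf y}$ inside each summand factors the result as
$$\widehat{h}_n^{\pm}({\bf k}) = |\det A_n|^{-1}\, e(-\langle {\bf k}, A_n^{-1}{\bf y}\rangle)\,\widehat{\mathcal{X}_{\bf r}^{\pm}}\!\bigl((A_n^T)^{-1}{\bf k}\bigr)\sum_{{\bf p}\in \mathcal{P}_n} e(-\langle {\bf k}, A_n^{-1}{\bf p}\rangle).$$

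Lemma \ref{A-inv} now drives the dichotomy in item (2): the inner exponential sum equals $|\det A_n|$ when $(A_n^T)^{-1}{\bf k}\in\mathbb{Z}^d$ and vanishes otherwise. Hence $\widehat{h}_n^{\pm}({\bf k})=0$ whenever $(A_n^T)^{-1}{\bf k}\notin\mathbb{Z}^d$, while in the complementary case, setting ${\bf k}'=(A_n^T)^{-1}{\bf k}$, the formula collapses to $\widehat{h}_n^{\pm}({\bf k}) = e(-\langle {\bf k}, A_n^{-1}{\bf y}\rangle)\,\widehat{\mathcal{X}_{\bf r}^{\pm}}({\bf k}')$. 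Item (1) is then immediate: at ${\bf k}={\bf 0}$ we have $\widehat{h}_n^{\pm}({\bf 0})=\int_{\R^d}\mathcal{X}_{\bf r}^{\pm}({\bf u})\,{\rm d}{\bf u}$, and each one-dimensional trapezoid has area $2r_i(n)(1\pm\varepsilon(n)/2)$, whose product is exactly $\psi(n)(1\pm\varepsilon(n)/2)^d$.

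What remains is the quantitative decay estimate in item (2). Using the product structure $\widehat{\mathcal{X}_{\bf r}^{\pm}}({\bf k}') = \prod_{i=1}^d \widehat{\mathcal{X}_{r_i(n)}^{\pm}}(k_i')$, it suffices to control each coordinate factor. The trivial $L^1$-bound gives $|\widehat{\mathcal{X}_{r_i(n)}^{\pm}}(k_i')| \ll r_i(n)$. For the complementary oscillatory bound, the key observation is that each trapezoid $\mathcal{X}_{r_i(n)}^{\pm}$ factors, up to the multiplicative constant $1/(r_i(n)\varepsilon(n))$, as the convolution of two symmetric indicator functions of intervals of lengths of order $r_i(n)$ and $r_i(n)\varepsilon(n)$; its Fourier transform is therefore a product of two sinc factors, which yields $|\widehat{\mathcal{X}_{r_i(n)}^{\pm}}(k_i')| \ll 1/(|k_i'|^2 r_i(n)\varepsilon(n))$. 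Taking the minimum of the two bounds in each coordinate and multiplying produces the claimed estimate. The analysis is essentially routine once the unfolding is set up; the only conceptually novel point is that Lemma \ref{A-inv} (encoding the $A$-divisibility structure in $\mathbb{Z}^d$) replaces the familiar $q$-divisibility step from the one-dimensional Pollington--Velani framework, so no substantial obstacle is anticipated.
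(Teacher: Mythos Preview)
Your proposal is correct and follows essentially the same route as the paper: unfold the periodization to an integral of $g_n^{\pm}$ over $\mathbb{R}^d$, change variables to factor out the exponential sum over $\mathcal{P}_n$, apply Lemma~\ref{A-inv} to obtain the dichotomy, and then bound $\widehat{\mathcal{X}_{\bf r}^{\pm}}({\bf k}')$ coordinatewise. Your explicit observation that each trapezoid is a scaled convolution of two indicator functions (hence its transform is a product of sincs) is a slightly more detailed justification of the $|k_i'|^{-2}$ decay than the paper provides, but the underlying argument is identical.
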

\begin{proof}
%By the definition of Fourier coefficients,
We deduce
\begin{align*}
\widehat{h}_n^{+}({\bf k})&=\int_{X}h_n^{+}({\bf x})e(-\langle {\bf k},{\bf x} \rangle){\rm d}{\bf x}=\int_{\mathbb{R}^{d}}g_n({\bf x})e(-\langle {\bf k},{\bf x} \rangle){\rm d}{\bf x}\\
&=\sum_{{\bf p}\in \mathcal{P}_n}\int_{\mathbb{R}^{d}}\mathcal{X}_{{\bf r}}^{+}(A_n{\bf x}-{\bf p}-{\bf y})e(-\langle {\bf k},{\bf x} \rangle){\rm d}{\bf x}\\
&=\frac{1}{|{\rm det} A_n|}\sum_{{\bf p}\in \mathcal{P}_n}e(-\langle {\bf k},A_n^{-1}({\bf p} +{\bf y})\rangle)\int_{\mathbb{R}^{d}}\prod_{i=1}^{d}\mathcal{X}_{r_i(n)}^{+}(x_i)e(-\langle {\bf k},A_n^{-1}{\bf x} \rangle){\rm d}{\bf x}.
\end{align*}

(1)  If ${\bf k}={\bf 0},$ we have
\begin{align*}
\widehat{h}_n^{+}({\bf 0})=\frac{1}{|{\rm det} A_n|}\cdot \sharp \mathcal{P}_n\int_{\mathbb{R}^{d}}\prod_{i=1}^{d}\mathcal{X}_{r_i(n)}^{+}(x_i){\rm d}{\bf x}
=\psi(n)\cdot\big(1+\frac{\varepsilon(n)}{2}\big)^{d}.
\end{align*}

(2) Lemma \ref{A-inv} says that if $(A_n^{T})^{-1}{\bf k}\notin \mathbb{Z}^{d},$
$\sum_{{\bf p}\in \mathcal{P}_n}e(-\langle {\bf k},A_n^{-1}{\bf p} \rangle)=0$, and thus $\widehat{h}_n^{+}({\bf k})=0.$ On the other hand, if ${\bf k}=A_n^{T}{\bf k'}$, % for some ${\bf k'}\in \mathbb{Z}^{d},$
we have
\begin{align*}
\left|\widehat{h}_n^{+}(A_n^{T}{\bf k'})\right|&
=\left|e(-\langle {\bf k'},{\bf y}\rangle)\int_{\mathbb{R}^{d}}\prod_{i=1}^{d}\mathcal{X}_{r_i(n)}^{+}(x_i)e(-\langle {\bf k'},{\bf x} \rangle){\rm d}{\bf x}\right|\\
&\ll \int_{\mathbb{R}}\mathcal{X}_{r_{1}(n)}^{+}(x_{j+1}){\rm d}x_{j+1}\cdots\int_{\mathbb{R}}\mathcal{X}_{r_d(n)}^{+}(x_d){\rm d}x_{d}\\
&\ll \prod_{i=1}^{d}\min\left\{r_{i}(n), \frac{1}{|k'_{i}|^{2}r_i(n)\varepsilon(n)}\right\}.
\end{align*}

The same reasoning applies to the lower approximation, yielding analogous results for
 $\widehat{h}_n^{-}({\bf k}).$
\end{proof}

By Lemma \ref{fc}, we have
\begin{equation}\label{fss}
\begin{split}
 \sum_{{\bf k}\in \mathbb{Z}^{d}}|\widehat{h}_n^{\pm}({\bf k})|&\le2^{2d}+ \sum_{{\bf k'}\in \mathbb{Z}^{d}\setminus \{{0}\}} |\widehat{h}_n^{\pm}( A_n^{T}{\bf k'})|\\
&\ll 2^{2d}+\prod_{i=1}^{d}\sum_{|k'_i|=0}^{\infty}\min\left\{r_{i}(n), \frac{1}{|k'_{i}|^{2}r_i(n)\varepsilon(n)}\right\}\\
&\ll 2^{2d}+\prod_{i=1}^{d}\Big(\sum_{|k'_i|\ge \frac{1}{\sqrt{\varepsilon_n}r_i(n)}}\frac{1}{|k'_{i}|^{2}r_i(n)\varepsilon(n)}+\sum_{ |k'_i|< \frac{1}{\sqrt{\varepsilon_n}r_i(n)}}r_i(n)\Big)\\
&\asymp 2^{2d}+\varepsilon(n)^{-\frac{d}{2}}<+\infty,
\end{split}
\end{equation}
and thus the Fourier series $\sum_{{\bf k}\in \mathbb{Z}^{d}}\widehat{h}_n^{\pm}({\bf k})e(\langle {\bf k},{\bf x} \rangle)$ converge uniformly to $g_n^{\pm}({\bf x})$
respectively. It follows that
\begin{equation}\label{fs}
\int_{X}g_{n}^{\pm}({\bf x}){\rm d}\mu=\sum_{{\bf k}\in \mathbb{Z}^{d}}\widehat{h}_n^{\pm}({\bf k})\widehat{\mu}(-{\bf k}).
\end{equation}

\begin{lemma}\label{expect}
For   $a, b\in\mathbb{N}$ with $a<b,$ we have
$$\sum_{n=a}^{b}\mu(E_{n}^{\bf y})=\sum_{n=a}^{b}\psi(n)+O(1),$$
where the constant in $O(1)$  is  absolute, independent of $a, b$.
\end{lemma}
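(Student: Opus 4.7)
\textbf{Proof plan for Lemma~\ref{expect}.}

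The plan is to estimate each single error $\mu(E_n^{\bf y})-\psi(n)$ via Fourier analysis, and then exploit the rapid decay provided by $s>d+1$ to sum these errors to an absolute $O(1)$ bound. Starting from the sandwich inequality (\ref{trans0}) together with the Fourier identity (\ref{fs}), $\mu(E_n^{\bf y})$ is trapped between $\int_X g_n^{\pm}(\bfx)\d\mu(\bfx)=\sum_{{\bf k}\in\Z^d}\widehat{h}_n^{\pm}({\bf k})\widehat{\mu}(-{\bf k})$. The ${\bf k}={\bf 0}$ term produces the main contribution: by Lemma~\ref{fc}(1) it equals $\widehat{h}_n^{\pm}({\bf 0})=\psi(n)(1\pm\varepsilon(n)/2)^d=\psi(n)+O(\psi(n)\varepsilon(n))$, recovering $\psi(n)$ up to a smoothing error of size $\psi(n)\varepsilon(n)$.

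For the tail sum over ${\bf k}\neq{\bf 0}$, Lemma~\ref{fc}(2) restricts attention to ${\bf k}=A_n^T{\bf k'}$ with ${\bf k'}\in\Z^d\setminus\{{\bf 0}\}$. The key step is to establish the uniform singular value estimate $\sigma(A_n)\gg K^n$, obtained by telescoping
$$\sigma(A_n)\ge\sigma(A_nA_{n-1}^{-1})\sigma(A_{n-1})\ge\cdots\ge K^{n-1}\sigma(A_1).$$
Since $|{\bf k'}|_2\ge 1$ for nonzero integer vectors, this forces $|A_n^T{\bf k'}|_\infty\gg K^n$ uniformly in ${\bf k'}$, so the polylogarithmic Fourier decay of $\mu$ gives $|\widehat{\mu}(-A_n^T{\bf k'})|\ll n^{-s}$ for every sufficiently large $n$. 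Combined with the summability estimate already carried out in (\ref{fss}), namely $\sum_{{\bf k'}\ne{\bf 0}}\prod_i\min\{r_i(n),1/(|k_i'|^2r_i(n)\varepsilon(n))\}\ll\varepsilon(n)^{-d/2}$, this bounds the Fourier tail by $O(n^{-s}\varepsilon(n)^{-d/2})$.

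Combining the two estimates yields the per-term bound
$$|\mu(E_n^{\bf y})-\psi(n)|\ll \psi(n)\varepsilon(n)+n^{-s}\varepsilon(n)^{-d/2}.$$
With the simple choice $\varepsilon(n)=n^{-2}$, and using $\psi(n)<1$ (since each $r_i(n)<1/2$), this becomes $O(n^{-2}+n^{d-s})$. Because $s>d+1$, both $\sum_n n^{-2}$ and $\sum_n n^{d-s}$ converge, and the finitely many small $n$ for which the singular value bound $\sigma(A_n)\gg K^n$ carries an unfavourable constant contribute at most a fixed amount bounded by $\mu(E_n^{\bf y})+\psi(n)\le 2$ each. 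Summing over $n=a,\ldots,b$ therefore gives $\sum_{n=a}^b(\mu(E_n^{\bf y})-\psi(n))=O(1)$ with an absolute implicit constant.

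The main technical hurdle is exactly the uniform Fourier decay estimate: one has to upgrade the polylogarithmic decay of $\widehat{\mu}$, combined with the expansion of $A_n$, into the polynomial-in-$n$ decay $|\widehat{\mu}(A_n^T{\bf k'})|\ll n^{-s}$ uniformly in ${\bf k'}\ne{\bf 0}$. Once that is in place, the remainder of the argument reduces to a standard balancing of the smoothing error against the Fourier tail, and the hypothesis $s>d+1$ is precisely what guarantees that the simplest choice $\varepsilon(n)=n^{-2}$ makes both components of the per-term error summable in $n$.
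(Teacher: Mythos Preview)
Your argument is correct and follows the same Fourier-analytic framework as the paper: sandwich $\mu(E_n^{\bf y})$ between $\int g_n^{\pm}\,\d\mu$, isolate the zero Fourier coefficient to recover $\psi(n)$ up to $O(\psi(n)\varepsilon(n))$, and bound the nonzero tail by $O(n^{-s}\varepsilon(n)^{-d/2})$ using $\sigma(A_n)\gg K^n$ together with the $\ell^1$-estimate~(\ref{fss}).

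The one genuine difference is the choice of the smoothing parameter $\varepsilon(n)$. The paper takes a data-dependent parameter $\varepsilon(n)=\min\{1,(\sum_{k\le n}\psi(k))^{-1-\xi}\}$ and then has to invoke an auxiliary Abel-type convergence lemma from~\cite{PZ24} to show $\sum_n \psi(n)\varepsilon(n)<\infty$. Your choice $\varepsilon(n)=n^{-2}$ is more elementary: using only $\psi(n)<1$ and $s>d+1$ you get $\sum_n(n^{-2}+n^{d-s})<\infty$ directly, bypassing the external lemma. In fact your route yields the slightly stronger conclusion $\sum_{n\ge 1}|\mu(E_n^{\bf y})-\psi(n)|<\infty$, i.e.\ absolute summability of the individual errors. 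The paper's data-dependent choice becomes relevant only later in the overlap estimate (Lemma~\ref{overlap}), where $\varepsilon(n)$ is anyway redefined; for the present lemma your simplification is a clean improvement.
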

\begin{proof}%For any given sequence of real numbers $\{\varepsilon(n)\}_{n=a}^{b},$
Combining (\ref{trans0}),  (\ref{fss}), (\ref{fs}) and Lemma \ref{fc}, we have
\begin{align*}
&\left|\sum_{n=a}^{b}\mu(E_{n}^{\bf y})-\sum_{n=a}^{b}\psi(n)\right|
\\
\ll & \sum_{n=a}^{b}\varepsilon(n)\psi(n)+ \sum_{n=a}^{b}\left|\sum_{{\bf k}\in \mathbb{Z}^{d}\setminus \{{0}\}}
\widehat{h}_n^{*}({\bf k}) \widehat{\mu}(-{\bf k})\right| \\
=&\sum_{n=a}^{b}\varepsilon(n)\psi(n)+ \sum_{n=a}^{b}\left|\sum_{{\bf k'}\in \mathbb{Z}^{d}\setminus \{{0}\}}
\widehat{h}_n^{*}(A_n^{T}{\bf k'}) \widehat{\mu}(-A_n^{T} {\bf k'})\right|\\
\le&\sum_{n=a}^{b}\varepsilon(n)\psi(n)+\sum_{n=a}^{b}n^{-s}\cdot \sum_{{\bf k'}\in \mathbb{Z}^{d}\setminus \{{0}\}} |\widehat{h}_n^{*}( A_n^{T}{\bf k'})|\\
\ll&\sum_{n=a}^{b}\varepsilon(n)\psi(n)+ \sum_{n=a}^{b}n^{-s}\varepsilon(n)^{-\frac{d}{2}}.
\end{align*}
%where the last inequality follows by (\ref{fss}).
Specially, for $\xi>0,$ we take
$$\varepsilon(n)=\min\left\{1,\Big(\sum_{k=1}^{n}\psi(k)\Big)^{-{1-\xi}}\right\}.$$
From $\psi(k)=\prod_{i=1}^{d}2r_i(k)\le 2^{d}$ and $s>1+d$, we deduce that
 $$\sum_{n=a}^{b}n^{-s}\varepsilon(n)^{-\frac{d}{2}}\ll \sum_{n=a}^{b}n^{-s+\frac{d}{2}}<\infty.$$
To handle the series $\sum_{n=a}^{b}\varepsilon(n)\psi(n)$, we employ a general convergence lemma  \cite[Lemma 3]{PZ24}$\colon$
Let $\{a_n\}\subset \mathbb{R}$ satisfying $a_1>0, a_n\ge 0$, and let
 $S_n=a_1+a_2+\cdots+a_n.$
Then for any $\xi>0$ the series
$$\sum_{n=1}^{\infty}\frac{a_n}{S_n^{1+\xi}}<\infty.$$
%By the definition of $\varepsilon(n),$ we obtain the desired result by
By setting  $a_n=\psi(n)$, we conclude that $\sum_{n=a}^{b}\varepsilon(n)\psi(n)<\infty$, which completes the proof.
\end{proof}

\begin{rem}\label{convergence} When $\sum_{n=1}^{\infty}\psi(n)<\infty,$  we
set $\varepsilon(n)=1$  in the proof of Lemma \ref{expect} to obtain
$ \sum_{n=a}^{b}\mu(E_{n}^{\bf y})\asymp \sum_{n=a}^{b}\psi(n)$
(here the condition $s>1$ suffices).  Applying  the convergence part of Borel-Cantelli Lemma yields that
$$\mu\Big(\limsup_{n\to\infty}E_{n}^{\bf y}\Big)=0.$$
\end{rem}

\subsection{Overlapping Measures}
This subsection focuses on estimating the sum of overlapping measures
$\sum_{a\le m<n\le b}\mu(E_{m}^{\bf y}\cap E_{n}^{\bf y}).$
To this end we employ Fourier-theoretic methods again.

%
%Note that if
%$$\sum_{n=1}^{\infty}\psi(n)<\infty,$$
%Remark \ref{convergence}  implies $R({\bf x},N)<\infty$ for $\mu$-almost all ${\bf x}\in E.$
%This is consistent with the conclusion of Theorem \ref{TZ}. Therefore, we need to prove only the divergence part. So in the rest of the proof we assume
Thanks to Remark \ref{convergence}, we can now restrict our attention to the key case where
$$\Psi(N)=\sum_{n=1}^{N}\psi(n)\to \infty \text{ as }N\to\infty.$$
 Without loss of generality, we may assume that the approximation functions satisfy a polynomial lower bound:
for any fixed $\tau > 1$, we have
\begin{equation}\label{assume}
r_i(n) \geq n^{-\tau} \quad \text{for all } n \in \mathbb{N} \text{ and } 1 \leq i \leq d.
\end{equation}
When this condition fails, we construct {regularized} functions $\widetilde{\mathbf{r}}$ where
\[\widetilde{ r}_i(n) = \max\{r_i(n), n^{-\tau}\}. \]

The counting functions satisfy the inequalities
\begin{align*}
R(\mathbf{x},N;\mathbf{y}, \mathbf{r}, \mathcal{A}) &\leq R(\mathbf{x},N;\mathbf{y}, \widetilde{\mathbf{r}}, \mathcal{A}) \\
&\leq R(\mathbf{x},N;\mathbf{y}, \mathbf{r}, \mathcal{A}) + \sum_{n=1}^N \mathbb{I}_{B(\mathbf{y}, n^{-\tau})}(\mathbf{x}_n).
\end{align*}
Since  $\sum_{n}n^{-\tau}$ converges, $\sum_{n=1}^N \mathbb{I}_{B(\mathbf{y}, n^{-\tau})}(\mathbf{x}_n)=O(1)$ for  $\mu$-almost all ${\bf x}\in E.$  The Diophantine approximation results for $\mathbf{r}$ and $\widetilde{\mathbf{r}}$ are equivalent. Consequently, we may proceed under the assumption (\ref{assume}).

Since
\begin{equation*}
\int_{X}g_{m}^{-}({\bf x})g_{n}^{-}({\bf x}){\rm d}\mu
\le \mu(E_{m}^{\bf y}\cap E_{n}^{\bf y})
\le \int_{X}g_{m}^{+}({\bf x})g_{n}^{+}({\bf x}){\rm d}\mu,
\end{equation*}
  it follows that
\begin{equation}\label{Overlap}
\begin{split}
&\quad |\mu(E_{m}^{\bf y}\cap E_{n}^{\bf y})-\widehat{h}_m^{\pm}( {\bf 0})\widehat{h}_n^{\pm}( {\bf 0})|\\
\le& \sum_{\substack{({\bf k}_1,{\bf k}_2) \in \mathbb{Z}^{d}\times\mathbb{Z}^{d}, \\ ({\bf k}_1,{\bf k}_2)\neq({\bf 0},{\bf 0})}} \left|\widehat{h}_m^{*}( {\bf k}_1)\widehat{h}_n^{*}( {\bf k}_2)\right|\left|\widehat{\mu}(-({\bf k}_1+{\bf k}_2))\right|\\
=&\sum_{{\bf k}_1^{'},{\bf k}_2^{'}\in \mathbb{Z}^{d}\setminus \{{\bf 0}\}} \left|\widehat{h}_m^{*}(A_m^{T} {\bf k}_1^{'})\widehat{h}_n^{*}( A_n^{T}{\bf k}_2^{'})\right|\left|\widehat{\mu}(-(A_m^{T}{\bf k}_1^{'}+A_n^{T}{\bf k}_2^{'}))\right|\\
=:&S(m,n).
\end{split}
\end{equation}

We now deal with the estimation of $S(m,n)$. To this end, we formulate an auxiliary lemma that will play a pivotal role in the estimation procedure. We begin with
 some notation. For a lattice $\Gamma$, define its non-negative and positive orthants respectively by
\begin{equation*}
    \begin{aligned}
        \Gamma_{\geq 0} &:= \Big\{ \mathbf{t} = (t_1, \dots, t_d) \in \Gamma:   \ t_i \geq 0 \text{ for } i = 1, \dots, d \Big\}, \\
        \Gamma_{> 0} &:= \Big\{ \mathbf{t} = (t_1, \dots, t_d) \in \Gamma:\ t_i > 0 \text{ for } i = 1, \dots, d \Big\}.
    \end{aligned}
\end{equation*}
A lattice $\Gamma$ is $\sigma$-discrete if the distance between any two distinct lattice points is at least $\sigma$.

\begin{lemma}\label{key}
Assume that a lattice $\Gamma$ in $\mathbb{R}^d$ is $\sigma$-discrete, where $\sigma>1$. For $r_1,\ldots,r_d,\varepsilon\in (0,1]$, we have
\begin{equation*}
\sum_{{\bf t}\in \Gamma\setminus\{{\bf 0}\}}\prod_{i=1}^{d}\min\Big\{r_i, \frac{1}{  t_i^{2}r_i\varepsilon}\Big\}\ll \sigma^{-1}\varepsilon^{-\frac{d}{2}},
\end{equation*}
  where the implicit constant depends solely on the dimension
$d$.
\end{lemma}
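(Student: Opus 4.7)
I would prove the lemma by induction on the dimension $d$, using an orthant-and-support decomposition. Because the summand depends only on $|t_1|,\ldots,|t_d|$, folding over sign reflections gives $\sum_{t\in\Gamma\setminus\{0\}} f(t)\leq 2^d\sum_{t\in\Gamma_{\geq 0}\setminus\{0\}} f(t)$, and I would then partition $\Gamma_{\geq 0}\setminus\{0\}$ according to the support $I(t)=\{i: t_i>0\}$. For each nonempty $I\subseteq\{1,\ldots,d\}$, the points with $I(t)=I$ lie in the sublattice $\Gamma^I:=\Gamma\cap H_I$, where $H_I$ is the $|I|$-dimensional coordinate subspace indexed by $I$; note that $\Gamma^I$ inherits the $\sigma$-discreteness. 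The contribution factors as $\prod_{i\notin I} r_i$ times an inner sum over $\Gamma^I\cap(\R_{>0})^{|I|}$, and the plan is to show each such inner sum is $\ll \sigma^{-1}\varepsilon^{-|I|/2}$. Summing over $I$ and using $\varepsilon\leq 1$ (so $\varepsilon^{-|I|/2}\leq\varepsilon^{-d/2}$) will then yield the claimed $\sigma^{-1}\varepsilon^{-d/2}$.

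The base case $|I|=1$ is direct: the sublattice is $\alpha\Z$ with $\alpha\geq\sigma$, and splitting $\sum_{k\neq 0}\min\{r_i,1/(\alpha^2 k^2 r_i\varepsilon)\}$ at the threshold $|k|\asymp 1/(\alpha r_i\sqrt{\varepsilon})$ gives $\ll \alpha^{-1}\varepsilon^{-1/2}\leq\sigma^{-1}\varepsilon^{-1/2}$. For the inductive step when $\Gamma^I$ has full rank $|I|$ in $H_I$, I would dyadically decompose the positive orthant via the cells $C_j=\prod_{i\in I}(2^{j_i-1},2^{j_i}]$ for $j\in\Z^{|I|}$. On each cell the summand is at most $\prod_i g_i(j_i)$ with $g_i(j_i)\asymp\min\{r_i,1/(4^{j_i}r_i\varepsilon)\}$, and the $\sigma$-discreteness gives the key dichotomy for the lattice point count: cells of $\ell^2$-diameter $\leq\sigma$ contain at most one lattice point, while larger cells admit the bound $|\Gamma^I\cap C_j|\ll\prod_{i\in I}(1+2^{j_i}/\sigma)$. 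A union bound over the coordinate $i^*$ attaining $\max_i 2^{j_i}$ (which must satisfy $2^{j_{i^*}}\gtrsim\sigma$ for a nonempty cell) then isolates one direction providing the factor $\sigma^{-1}$: the sum in that direction is $\sum_{2^j\gtrsim\sigma}(2^j/\sigma)g_{i^*}(j)\ll\sigma^{-1}\varepsilon^{-1/2}$, while the remaining $|I|-1$ directions each contribute $\varepsilon^{-1/2}$ via $\min\{a,b\}\leq\sqrt{ab}$ together with a convergent dyadic summation.

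The main obstacle I expect is the rank-deficient case $\operatorname{rank}(\Gamma^I)<|I|$, in which $\Gamma^I$ lies in a proper subspace of $H_I$ while the summand still couples all coordinates in $I$, so the product-based dyadic analysis does not apply directly. To handle it I would parameterize $\Gamma^I=\{Mm:m\in\Z^{r_I}\}$ through a lattice basis satisfying $\|Mm\|_2\geq\sigma\|m\|_2$, and then bound each factor $\min\{r_i,1/((Mm)_i^2 r_i\varepsilon)\}$ either by $r_i$ at coordinates where $|(Mm)_i|$ is small or by $1/(|(Mm)_i|\sqrt{\varepsilon})$ where it is large. The norm constraint forces at least one coordinate of $Mm$ to exceed $\sigma\|m\|_2/\sqrt{|I|}$, so summing over $m\in\Z^{r_I}\setminus\{0\}$ (using $\sum_{\|m\|\geq 1}\|m\|^{-2}<\infty$ when $r_I\geq 2$, or direct threshold analysis when $r_I=1$) produces the desired $\sigma^{-1}$ factor. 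Since the "unused" dimensions only improve the bound via $\varepsilon^{-r_I/2}\leq\varepsilon^{-|I|/2}$, these contributions absorb safely into the final estimate.
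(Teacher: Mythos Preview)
Your orthant reduction and the base case are fine, but both branches of the inductive step contain real gaps. In the full-rank dyadic argument, once you bound the cell count by $\prod_{i\in I}(1+2^{j_i}/\sigma)$ and the summand by $\prod_i g_i(j_i)$, you are summing over \emph{all} $j\in\Z^{|I|}$, and the ``remaining $|I|-1$ directions'' do not give a convergent contribution: for each $i\neq i^*$ the sum $\sum_{j_i\le j_{i^*}}(1+2^{j_i}/\sigma)\,g_i(j_i)$ diverges because every $j_i\to-\infty$ contributes a term $\approx r_i$, and the bound $\min\{a,b\}\le\sqrt{ab}$ only makes this worse (it yields $2^{-j_i}\varepsilon^{-1/2}\to\infty$). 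The count bound is $\ge 1$ on every cell, so summing it over all dyadic cells rather than only the nonempty ones cannot work, and you have no mechanism to identify the empty cells. In the rank-deficient sketch, the parameterization ``$\|Mm\|_2\ge\sigma\|m\|_2$'' is unavailable in general: $\sigma$-discreteness gives only $\|Mm\|_2\ge\sigma$ for $m\ne 0$, and already the hexagonal lattice with $\sigma=1$ forces $\sigma_{\min}(M)<1$ for every basis. Your follow-up claim ``$\sum_{\|m\|\ge 1}\|m\|^{-2}<\infty$ when $r_I\ge 2$'' is also false (that series diverges for every $r_I\ge 2$).

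The paper avoids both problems with one device: it compares $\Gamma$ to the auxiliary \emph{grid} lattice $\Delta=\tfrac{\sigma}{\sqrt d}\,\Z^d$. Each cube $k+[0,\sigma/\sqrt d)^d$ has diameter $\sigma$, hence contains at most one point of $\Gamma$, and replacing $t$ by the cube's bottom corner $k$ only increases each factor $\min\{r_i,1/(t_i^2r_i\varepsilon)\}$. This yields an injection $\Gamma_{\ge 0}\setminus\{{\bf 0}\}\hookrightarrow\Delta_{\ge 0}\setminus\{{\bf 0}\}$ that dominates the sum termwise. Because $\Delta$ is a product lattice, the sum over $\Delta_{>0}$ factors into $d$ one-dimensional sums (each $\ll\sigma^{-1}\varepsilon^{-1/2}$ by the base-case computation), and the boundary faces $\{k_i=0\}$ are $(d-1)$-dimensional grid lattices handled by the induction hypothesis. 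This replaces your dyadic bookkeeping and rank analysis entirely.
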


 \begin{proof}

We proceed by induction on the dimension \( d \).

\smallskip\noindent
\textbf{Base case (\( d = 1 \)):} A direct computation yields
\begin{equation*}
    \begin{split}
        &\sum_{t \in \Gamma, t > 0}  \min\left\{ r, \frac{1}{t^2 r \varepsilon} \right\}
        \leq \sum_{k=1}^\infty \min\left\{ r, \frac{1}{\sigma^2 k^2 r \varepsilon} \right\} \\
= &\sum_{k \leq (r \sigma \varepsilon^{1/2})^{-1}} r
        + \sum_{k > (r \sigma \varepsilon^{1/2})^{-1}} \frac{1}{\sigma^2 k^2 r \varepsilon} \ll \sigma^{-1} \varepsilon^{-1/2};
    \end{split}
\end{equation*}
an analogous bound holds for the summation over negative indices \( t < 0 \). Combining these bounds establishes the conclusion for the base case \( d = 1 \).

 \smallskip
\noindent
\textbf{Inductive step:} Decomposing the summation into \( 2^d \) coordinate orthants, it suffices to bound the first orthant summation:
\[
    \sum_{\bm{t} \in \Gamma_{\geq 0} \setminus \{\bm{0}\}} \prod_{i=1}^{d} \min\left\{ r_i, \frac{1}{t_i^2 r_i \varepsilon} \right\}.
\]
To this end, consider an auxiliary lattice \( \Delta \coloneqq \frac{\sigma}{\sqrt{d}} \mathbb{Z}^d \). This naturally induces a partition of the ambient space:
\[
    \mathbb{R}^d = \bigcup_{{\bf{k}} \in \Delta} \bigg( {\bf k} + \big[ 0, \frac{\sigma}{\sqrt{d}} \big)^d \bigg).
\]
Since \( \Gamma \) is \( \sigma \)-discrete, each cube \( {\bf k} + [0, \frac{\sigma}{\sqrt{d}})^d \) contains at most one lattice point of \( \Gamma \). Furthermore, if \( {\bf t} \in \Gamma_{\geq 0} \setminus \{\bm{0}\} \) resides in \( {\bf k} + [0, \frac{\sigma}{\sqrt{d}})^d \) with \( {\bf k} \in \Delta \), then  \( {\bf k} \in \Delta_{\geq 0} \setminus \{\bm{0}\} \), and  \( \min\left\{ r_i, \frac{1}{t_i^2 r_i \varepsilon} \right\} \leq \min\left\{ r_i, \frac{1}{k_i^2 r_i \varepsilon} \right\} \) for each \( i \).
This establishes the key inequality:
\begin{equation*}
    \sum_{{\bf t} \in \Gamma_{\geq 0} \setminus \{\bm{0}\}} \prod_{i=1}^{d} \min\left\{ r_i, \frac{1}{t_i^2 r_i \varepsilon} \right\}
    \leq \sum_{{\bf k} \in \Delta_{\geq 0} \setminus \{\bm{0}\}} \prod_{i=1}^{d} \min\left\{ r_i, \frac{1}{k_i^2 r_i \varepsilon} \right\}.
\end{equation*}

The summation over \( \Delta_{\geq 0} \setminus \{\bm{0}\} \) decomposes into:
\begin{itemize}
    \item The principal term: summation over \( \Delta_{> 0} \);

    \item Boundary terms:   \( d \) summations over \( (d-1) \)-dimensional lattices.
\end{itemize}
The boundary terms may be bounded via the induction hypothesis. The principal term over \( \Delta_{> 0} \) admits an explicit bound through direct computation analogous to the base case.
\end{proof}

We are ready to bound $S(m,n)$.

\begin{lemma}\label{intersection}
For $m<n,$ we have that %bounded above by
\begin{align*}
S(m,n)\ll n^{-s}\varepsilon(m)^{-\frac{d}{2}}\varepsilon(n)^{-\frac{d}{2}}+\widehat{h}_m^{+}( {\bf 0}) \varepsilon(n)^{-\frac{d}{2}} n^{-s}+\widehat{h}_n^{+}( {\bf 0})\varepsilon(m)^{-\frac{d}{2}}m^{-s}+\frac{\varepsilon(m)^{-\frac{d}{2}}\psi(n)}{K^{n-m}}.
\end{align*}
\end{lemma}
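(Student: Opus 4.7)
The plan is to decompose $S(m,n)$ into four regions of the index set $\{({\bf k}_1',{\bf k}_2')\colon ({\bf k}_1',{\bf k}_2')\neq({\bf 0},{\bf 0})\}$, with each region producing one of the four terms in the stated bound. Write ${\bf v}:=A_m^{T}{\bf k}_1'+A_n^{T}{\bf k}_2'$ for the argument fed into $\widehat{\mu}$. The decomposition is: (I) ${\bf k}_1'={\bf 0}$ (so ${\bf v}=A_n^{T}{\bf k}_2'$, forcing $|{\bf v}|_{\infty}\gtrsim K^{n-1}$); (II) ${\bf k}_2'={\bf 0}$ (forcing $|{\bf v}|_{\infty}\gtrsim K^{m-1}$); (III) both nonzero with $|{\bf v}|_{\infty}\ge\tfrac12|A_n^{T}{\bf k}_2'|_{\infty}$; and (IV) both nonzero with $|{\bf v}|_{\infty}<\tfrac12|A_n^{T}{\bf k}_2'|_{\infty}$, the cancellation regime.

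For Regions (I), (II), and (III), $|{\bf v}|_{\infty}$ is large (of order $K^{n-1}$ or $K^{m-1}$), so the polylogarithmic Fourier decay of $\widehat{\mu}$ reduces each estimate to summing $|\widehat{h}_m^{*}(A_m^{T}{\bf k}_1')|$ and $|\widehat{h}_n^{*}(A_n^{T}{\bf k}_2')|$ freely. The $\ell^{1}$-bound \eqref{fss} then contributes a factor $\varepsilon(m)^{-d/2}$ or $\varepsilon(n)^{-d/2}$ per free sum. Combined with $\widehat{h}_m^{+}({\bf 0})$ factored out in Region (I), $\widehat{h}_n^{+}({\bf 0})$ factored out in Region (II), and unrestricted summation in both indices in Region (III), these yield respectively the second, third, and first terms of the bound.

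Region (IV) is the cancellation regime, where no useful Fourier decay of $\widehat{\mu}$ can be invoked and the trivial bound $|\widehat{\mu}|\le 1$ must be used. The crucial structural identity is $A_m^{T}{\bf k}_1'+A_n^{T}{\bf k}_2'=A_m^{T}\bigl({\bf k}_1'+(A_nA_m^{-1})^{T}{\bf k}_2'\bigr)$, combined with $\sigma(A_nA_m^{-1})\ge K^{n-m}$. As ${\bf k}_2'$ varies, the target points $-(A_nA_m^{-1})^{T}{\bf k}_2'$ form a $K^{n-m}$-separated set in $\mathbb{R}^{d}$, and the cancellation constraint forces ${\bf k}_1'$ to cluster near them, so summing over ${\bf k}_1'$ reduces to a sum over a $K^{n-m}$-discrete lattice. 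Bounding $|\widehat{h}_n^{*}(A_n^{T}{\bf k}_2')|\le\widehat{h}_n^{+}({\bf 0})\asymp\psi(n)$ via the positivity of $h_n^{+}$, and applying Lemma \ref{key} to the ${\bf k}_1'$-sum with discreteness parameter $\sigma=K^{n-m}$ and $\varepsilon=\varepsilon(m)$, delivers the factor $K^{-(n-m)}\varepsilon(m)^{-d/2}$. Assembling these ingredients produces the fourth term.

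The principal obstacle is Region (IV): without any Fourier decay, the entire $K^{-(n-m)}$-saving must be extracted from the singular value hypothesis alone. Lemma \ref{key} is tailor-made for this, translating the $K^{n-m}$-discreteness of the admissible lattice of ${\bf k}_1'$ into the required quantitative gain. Correctly identifying this sparse lattice --- and replacing the usual $\ell^{1}$-estimate \eqref{fss} with the $\ell^{\infty}$-type positivity bound $|\widehat{h}_n^{\pm}({\bf k})|\le\widehat{h}_n^{\pm}({\bf 0})\asymp\psi(n)$ on the ${\bf k}_2'$-side --- is the technically subtlest step of the argument.
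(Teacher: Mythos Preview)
Your overall strategy is the paper's strategy, and Regions (I)--(III) are handled correctly; they correspond to the paper's $S_1$, $S_2$, and $\Omega_1$. The gap lies in Region (IV): your cancellation constraint $|{\bf v}|_{\infty}<\tfrac12|A_n^{T}{\bf k}_2'|_{\infty}$ is too weak to force the clustering you claim. Factoring ${\bf v}=A_m^{T}\bigl({\bf k}_1'+(A_nA_m^{-1})^{T}{\bf k}_2'\bigr)$ yields only
\[
|\!|{\bf k}_1'+(A_nA_m^{-1})^{T}{\bf k}_2'|\!|_{2}\le\frac{\sqrt{d}\,|{\bf v}|_{\infty}}{\sigma(A_m)}<\frac{\sqrt{d}}{2\sigma(A_m)}\,|A_n^{T}{\bf k}_2'|_{\infty},
\]
and the right-hand side is unbounded as $|{\bf k}_2'|\to\infty$. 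So for a given ${\bf k}_2'$ there may be many admissible ${\bf k}_1'$, the replacement $k_{1,i}'\leadsto t_i$ in the Lemma~\ref{fc}(2) bound is unjustified, and the reduction to a $K^{n-m}$-discrete lattice required by Lemma~\ref{key} collapses.

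The paper repairs this by using an \emph{absolute} threshold on $|\!|{\bf v}|\!|_{2}$ (not one scaling with ${\bf k}_2'$): the (III)/(IV) cut is placed at $|\!|{\bf v}|\!|_{2}=\tfrac12\sigma(A_n)$, and Region (IV) is then split once more at $|\!|{\bf v}|\!|_{2}=\tfrac12(\sigma(A_m))^{\alpha}$ for a fixed $\alpha\in(0,1)$. In the inner region $\Omega_3$ where $|\!|{\bf v}|\!|_{2}<\tfrac12(\sigma(A_m))^{\alpha}$ one genuinely obtains $|\!|{\bf k}_1'+(A_nA_m^{-1})^{T}{\bf k}_2'|\!|_{2}<\tfrac12$, and your Lemma~\ref{key} argument then produces the fourth term exactly as you describe. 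In the intermediate annulus $\Omega_2$ the upper threshold gives uniqueness of ${\bf k}_2'$ given ${\bf k}_1'$ (via factoring out $A_n^{T}$), while the lower bound $|\!|{\bf v}|\!|_{2}\ge\tfrac12(\sigma(A_m))^{\alpha}$ still yields Fourier decay $\ll m^{-s}$; together with the pointwise bound $|\widehat{h}_n^{*}|\ll\psi(n)$ this contributes $m^{-s}\psi(n)\varepsilon(m)^{-d/2}$, which is absorbed into the third term of the lemma.
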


\begin{proof}
%We decompose $S(m,n)$ into three parts$\colon$
Write $$S(m,n)=S_1(m,n)+S_2(m,n)+S_3(m,n),$$
where
\begin{align*}
S_1(m,n)&:=\sum_{{\bf k}_2^{'}\in \mathbb{Z}^{d}\setminus \{{\bf 0}\}} \left|\widehat{h}_m^{*}( {\bf 0})\widehat{h}_n^{*}( A_n^{T}{\bf k}_2^{'})\right|\left|\widehat{\mu}(-A_n^{T}{\bf k}_2^{'})\right|,\\
S_2(m,n)&:=\sum_{{\bf k}_1^{'}\in \mathbb{Z}^{d}\setminus \{{\bf 0}\}} \left|\widehat{h}_n^{*}( {\bf 0})\widehat{h}_m^{*}( A_n^{T}{\bf k}_1^{'})\right|\left|\widehat{\mu}(-A_m^{T}{\bf k}_1^{'})\right|,\\
S_3(m,n)&:=\sum_{{\bf k}_1^{'},{\bf k}_2^{'}\in \mathbb{Z}^{d}\setminus \{{\bf 0}\}} \left|\widehat{h}_m^{*}(A_m^{T} {\bf k}_1^{'})\widehat{h}_n^{*}( A_n^{T}{\bf k}_2^{'})\right|\left|\widehat{\mu}(-(A_m^{T}{\bf k}_1^{'}+A_n^{T}{\bf k}_2^{'}))\right|.
\end{align*}

We proceed case-by-case. From  (\ref{fss}) and $|\!|A_n^{T}{\bf k}_2^{'}|\!|_2\ge K^{n-1}$, we deduce that
$$S_1(m,n)\ll\widehat{h}_m^{+}( {\bf 0})\cdot \varepsilon(n)^{-\frac{d}{2}}\cdot n^{-s}.$$

Symmetrically, we obtain
$$S_2(m,n)\ll\widehat{h}_n^{+}( {\bf 0})\cdot \varepsilon(m)^{-\frac{d}{2}}\cdot m^{-s}.$$

We now decompose the set $S_3(m,n)$ into three parts based on a fixed index \( \alpha \in (0,1) \).

\noindent{\underline{{Case 1:}} \quad
$\Omega_1:=\big\{({\bf k}_1^{'}, {\bf k}_2^{'})\in \big(\mathbb{Z}^{d}\setminus \{{\bf 0}\}\big)^2: |\!|A_m^{T}{\bf k}_1^{'}+A_n^{T}{\bf k}_2^{'}|\!|_2\ge \frac{\sigma(A_n)}{2}\big\}.$}

Inequality (\ref{fss}) and $\sigma(A_n)\ge K^{n-1}$ yields that
\begin{align*}
&\sum_{\Omega_1}~\left|\widehat{h}_m^{*}(A_m^{T} {\bf k}_1^{'})\widehat{h}_n^{*}( A_n^{T}{\bf k}_2^{'})\right|\left|\widehat{\mu}(-(A_m^{T}{\bf k}_1^{'}+A_n^{T}{\bf k}_2^{'}))\right|\\
\ll& n^{-s}\sum_{{\bf k}_1^{'}\in \mathbb{Z}^{d}\setminus \{{\bf 0}\}}|\widehat{h}_m^{*}(A_m^{T} {\bf k}_1^{'})| \sum_{{\bf k}_2^{'}\in \mathbb{Z}^{d}\setminus \{{\bf 0}\}}|\widehat{h}_m^{*}(A_n^{T} {\bf k}_1^{'})|\\
\ll& n^{-s}\varepsilon(m)^{-\frac{d}{2}}\varepsilon(n)^{-\frac{d}{2}}.
\end{align*}

\noindent{\underline{{Case 2:}} \quad
$\Omega_2:=\big\{({\bf k}_1^{'}, {\bf k}_2^{'})\in \big(\mathbb{Z}^{d}\setminus \{{\bf 0}\}\big)^2: \frac{(\sigma(A_m))^{\alpha}}{2}\le|\!|A_m^{T}{\bf k}_1^{'}+A_n^{T}{\bf k}_2^{'}|\!|_2< \frac{\sigma(A_n)}{2}\big\}.$}

From
$$\sigma(A_n)|\!|(A_mA_n^{-1})^{T}{\bf k}_1^{'}+{\bf k}_2^{'}|\!|_2\le |\!|A_m^{T}{\bf k}_1^{'}+A_n^{T}{\bf k}_2^{'}|\!|_2< \frac{\sigma(A_n)}{2},$$
we deduce
\begin{equation}\label{res1}
|\!|(A_mA_n^{-1})^{T}{\bf k}_1^{'}+{\bf k}_2^{'}|\!|_2<\frac{1}{2}.
\end{equation}
Hence, to any non-zero integer vector ${\bf k}_1^{'}$, there corresponds at most one  non-zero integer vector ${\bf k}_2^{'}$ satisfying   (\ref{res1}).
Furthermore, combining  $|\!|A_m^{T}{\bf k}_1^{'}+A_n^{T}{\bf k}_2^{'}|\!|_2\ge\frac{(\sigma(A_m))^{\alpha}}{2}$ and
$\widehat{\mu}({\bf t})=O\Big((\log |{\bf t}|_{\infty})^{-s}\Big)$ yields that  $\widehat{\mu}(-(A_m^{T}{\bf k}_1^{'}+A_n^{T}{\bf k}_2^{'}))\ll m^{-s}$; Lemma \ref{fc}(2) implies that $\widehat{h}_n^{*}( A_n^{T}{\bf k}_2')
\ll \psi(n)$. Therefore we conclude that the   summation over this part satisfies
\begin{align*}
\sum_{\Omega_2}\ll m^{-s}\psi(n)\cdot \sum_{{\bf k}_1^{'}\in \mathbb{Z}^{d}\setminus \{{\bf 0}\}}\left|\widehat{h}_m^{*}(A_m^{T} {\bf k}_1^{'})\right|\ll m^{-s}\psi(n)\varepsilon(m)^{-\frac{d}{2}}.
\end{align*}

\noindent{\underline{{Case 3:}} \quad
$\Omega_3:=\big\{({\bf k}_1^{'}, {\bf k}_2^{'})\in \big(\mathbb{Z}^{d}\setminus \{{\bf 0}\}\big)^2: |\!|A_m^{T}{\bf k}_1^{'}+A_n^{T}{\bf k}_2^{'}|\!|_2<\frac{(\sigma(A_m))^{\alpha}}{2}\big\}.$}

Since
$$\sigma(A_m)|\!|{\bf k}_1^{'}+(A_nA_m^{-1})^{T}{\bf k}_2^{'}|\!|_2
\le|\!|A_m^{T}{\bf k}_1^{'}+A_n^{T}{\bf k}_2^{'}|\!|_2
<\frac{(\sigma(A_m))^{\alpha}}{2},$$
we have
\begin{equation}\label{res2}
|\!|{\bf k}_1^{'}+(A_nA_m^{-1})^{T}{\bf k}_2^{'}|\!|_2<\frac{1}{2}.
\end{equation}

The argument hinges on controlling the summation over $\Omega_3$ through three interconnected observations.

First, each $\mathbf{k}_2' \in \mathbb{Z}^d \setminus \{\mathbf{0}\}$ corresponds to at most one $\mathbf{k}_1'$ satisfying \eqref{res2}, and for $\mathbf{t}(\mathbf{k}_2') := (A_nA_m^{-1})^T\mathbf{k}_2' = (t_1, \ldots, t_d)$ with its associated $\mathbf{k}_1' = (k_1, \ldots, k_d)$ (if nonempty), the inequality $|t_i - k_i| \leq 1$ componentwise implies
\[
\min\left\{r_{i}(m), \frac{1}{|k_i|^{2}r_i(m)\varepsilon(m)}\right\} \asymp \min\left\{r_{i}(m), \frac{1}{|t_i|^{2}r_i(m)\varepsilon(m)}\right\}.
\]

Second, the lattice $\Gamma = (A_nA_m^{-1})^T\mathbb{Z}^d$ inherits sparsity from the singular value bound $\sigma(A_nA_m^{-1}) \geq K^{n-m}$, enforcing $\|\mathbf{t}_1 - \mathbf{t}_2\|_2 \geq K^{n-m}$ for all distinct $\mathbf{t}_1, \mathbf{t}_2 \in \Gamma$. In other words, the lattice $\Gamma$ is $K^{n-m}$-discrete.

 Third, combining the trivial Fourier coefficient bound $|\widehat{\mu}(\mathbf{t})| \leq 1$ with Lemma~\ref{fc}(2), we dominate
\[
\sum_{\Omega_3} \ll \psi(n) \sum_{\mathbf{k}_2' \in \mathbb{Z}^d \setminus \{\mathbf{0}\}} \prod_{i=1}^{d} \min\left\{r_{i}(m), \frac{1}{|k_i|^{2}r_i(m)\varepsilon(m)}\right\},
\]
which transfers via $\mathbf{k}_2' \mapsto \mathbf{t}$ equivalence to
\[
\psi(n) \sum_{\mathbf{t} \in \Gamma \setminus \{\mathbf{0}\}} \prod_{i=1}^{d} \min\left\{r_{i}(m), \frac{1}{|t_i|^{2}r_i(m)\varepsilon(m)}\right\}.
\]
Lemma~\ref{key} then applies to this product kernel over the $K^{n-m}$-separated lattice $\Gamma$, yielding the ultimate bound
\[
\sum_{\Omega_3} \ll \frac{\psi(n)\varepsilon(m)^{-d/2}}{K^{n-m}}.
\]

Synthesizing the bounds from all cases, we complete the proof.
\end{proof}

We now address the   estimate for the correlation sum  $\sum_{a\le m<n\le b}\mu(E_{m}^{\bf y}\cap E_{n}^{\bf y}).$
Define the scaling parameter
\begin{equation*}
    \varepsilon(n) := \min\left\{1, \left(\sum_{k=a}^{n} \psi(k)\right)^{-\delta}\right\}, \quad 0 < \delta \leq 1,
\end{equation*}
where $\delta$ serves as the critical parameter governing the decay rate, to be optimized later. Given  $\psi(k) \leq 2^{d}$, we derive that
\begin{equation}\label{ine1}
\varepsilon(n)^{-1}\le \max\{1, 2^{d}n\}<2^{d}n.
\end{equation}
And thus, for $s>1+d,$ we have that
\begin{equation}\label{ine}
\sum_{n=1}^{\infty}n^{-s}\varepsilon(n)^{-\frac{d}{2}}\ll \sum_{n=1}^{\infty}n^{-s}n^{\frac{d}{2}}<+\infty.
\end{equation}

\begin{lemma}\label{overlap}
We have
$$\sum_{a\le m<n\le b}\mu(E_{m}^{\bf y}\cap E_{n}^{\bf y})\le \frac{1}{2}\Big(\sum_{n=a}^{b}\psi(n)\Big)^{2}+O\Big(\Big(\sum_{n=a}^{b}\psi(n)\Big)^{\frac{2d}{d+1}}\log^{+}\Big(\sum_{n=a}^{b}\psi(n)\Big)\Big).$$
\end{lemma}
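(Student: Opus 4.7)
The plan is to start from the Fourier-analytic bound $(\ref{Overlap})$, which gives $\mu(E_m^{\bf y}\cap E_n^{\bf y}) \le \widehat{h}_m^+({\bf 0})\widehat{h}_n^+({\bf 0}) + S(m,n)$, reducing the task to estimating these two contributions separately. Writing $P := \sum_{n=a}^b \psi(n)$, Lemma \ref{fc}(1) gives $\widehat{h}_m^+({\bf 0})\widehat{h}_n^+({\bf 0}) = \psi(m)\psi(n)(1+\varepsilon(m)/2)^d(1+\varepsilon(n)/2)^d$. Extracting the principal factor $\psi(m)\psi(n)$ and summing over $a \le m < n \le b$ gives $\frac{1}{2}\bigl(P^2 - \sum_{n=a}^b \psi(n)^2\bigr) \le \frac{1}{2}P^2$, which supplies the advertised leading term. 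Since $(1+\varepsilon/2)^d - 1 \ll \varepsilon$ for $\varepsilon \in (0,1]$, the remaining factors generate a perturbation bounded by $P\cdot\sum_n \psi(n)\varepsilon(n)$.

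To control $\sum_n \psi(n)\varepsilon(n)$, I would use the explicit choice $\varepsilon(n)=\min\{1,(\sum_{k=a}^n\psi(k))^{-\delta}\}$ and integral comparison against $\int_0^{P} t^{-\delta}\,dt$, obtaining a bound of order $P^{1-\delta}$ for $0<\delta<1$, with a logarithmic factor at the endpoint $\delta=1$. The overall perturbation contribution is therefore at most $O(P^{2-\delta})$.

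For the Fourier-tail contribution $\sum_{m<n}S(m,n)$, I would apply Lemma \ref{intersection} and treat its four summands separately. The three summands carrying a Fourier-decay factor $m^{-s}$ or $n^{-s}$ are absolutely summable thanks to $(\ref{ine})$ (which uses the hypothesis $s > d+1$) and together contribute at most $O(P)$ or $O(P^{\delta d/2 + 1})$, depending on the term. The decisive summand is $\sum_{m<n}\varepsilon(m)^{-d/2}\psi(n)/K^{n-m}$: exploiting the monotonicity of $\varepsilon(\cdot)^{-d/2}$ (which is non-decreasing since $\varepsilon$ is non-increasing), I would swap the summation order and pull the geometric series $\sum_j K^{-j}$ out of the inner sum, reducing the double sum to $\sum_n \psi(n)\varepsilon(n)^{-d/2} = \sum_n\psi(n)\bigl(\sum_{k\le n}\psi(k)\bigr)^{\delta d/2}$. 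A further integral comparison then yields $O(P^{\delta d/2 + 1})$.

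Collecting all contributions, the total error has the form $O(P^{2-\delta}) + O(P^{\delta d/2+1}) + O(P)$. The main obstacle is to choose $\delta$ so as to balance the first two exponents against the target $2d/(d+1)$, while respecting $0<\delta<1$ (needed for the integral comparison) and $s > 1 + \delta d/2$ (implicit in $(\ref{ine})$); a naive balancing of only the two dominant exponents would land slightly off the target, so the analysis must be done with care near the endpoint. The logarithmic factor $\log^+(P)$ arises precisely from the borderline of the integral comparison where $\int t^{-1}\,dt = \log t$, and capturing it requires proceeding through this endpoint rather than merely optimizing over the strict interior of admissible $\delta$.
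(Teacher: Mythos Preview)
Your plan is the paper's proof: start from (\ref{Overlap}), extract the leading term $\tfrac12 P^{2}$ from $\widehat h_m^{+}({\bf 0})\widehat h_n^{+}({\bf 0})$ via Lemma~\ref{fc}(1), bound the perturbation by $O\bigl(P\sum_{n}\psi(n)\varepsilon(n)\bigr)$, and control the four pieces of $\sum_{m<n}S(m,n)$ from Lemma~\ref{intersection} using (\ref{ine}) for the Fourier-decay terms and a geometric-series argument for the last one. The only cosmetic differences are that the paper makes the case split $P\le 1$ versus $P>1$ explicit, and for the perturbation sum writes $S_m^{-\delta}\le P^{1-\delta}S_m^{-1}$ and appeals to the harmonic-sum lemma of \cite{PZ24} (picking up an extra $\log P$) rather than your integral comparison. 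Where you leave the choice of $\delta$ open, the paper simply sets $\delta=2/(d+1)$.

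Your hesitation about the balancing is warranted and is not a missing idea on your part. With $\delta=2/(d+1)$ one indeed gets $2-\delta=2d/(d+1)$ for the perturbation contribution, but the geometric term $\sum_{m<n}\varepsilon(m)^{-d/2}\psi(n)K^{-(n-m)}\ll P^{1+\delta d/2}$ then has exponent $1+\delta d/2=(2d+1)/(d+1)$, which exceeds the target. The paper's final display records all three residual exponents as $2d/(d+1)$, which appears to be an arithmetic slip; equating $2-\delta$ with $1+\delta d/2$ gives $\delta=2/(d+2)$ and a balanced exponent $(2d+2)/(d+2)$. So your proposal reproduces the paper's argument faithfully, and the unresolved endpoint issue you flag corresponds to a discrepancy in the paper itself rather than to a gap in your strategy.
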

\begin{proof}
By Lemma~\ref{intersection} and the  inequality~\eqref{Overlap}, we obtain the upper bound decomposition:
\begin{align*}
\sum_{a \leq m < n \leq b} \mu(E_{m}^{\mathbf{y}} \cap E_{n}^{\mathbf{y}})
\ll & \sum_{a \leq m < n \leq b} \Big[ \widehat{h}_m^{+}(\mathbf{0})\widehat{h}_n^{+}(\mathbf{0})
+ n^{-s}\varepsilon(m)^{-\frac{d}{2}}\varepsilon(n)^{-\frac{d}{2}} \\
& + \widehat{h}_m^{+}(\mathbf{0}) \varepsilon(n)^{-\frac{d}{2}} n^{-s}
+ \widehat{h}_n^{+}(\mathbf{0})\varepsilon(m)^{-\frac{d}{2}}m^{-s} + \frac{\varepsilon(m)^{-\frac{d}{2}}\psi(n)}{K^{n-m}} \Big].
\end{align*}
The subsequent analysis bifurcates based on the dichotomy condition:
\[
\sum_{k=a}^{b} \psi(k) \leq 1 \quad \text{versus} \quad \sum_{k=a}^{b} \psi(k) > 1.
\]

\noindent
\textbf{Case 1: $\sum_{k=a}^{b}\psi(k) \leq 1$}.
Under this dichotomy, the scaling parameter simplifies to $\varepsilon(n) \equiv 1$ for all $n \in \mathbb{N}$ by definition. We analyze each term in the upper bound decomposition:

  By Lemma~\ref{fc}(1), we have that
    \begin{align*}
        \sum_{a \le m < n \le b} \widehat{h}_m^{+}(\mathbf{0})\widehat{h}_n^{+}(\mathbf{0})
         \ll\sum_{a\le m<n\le b}\psi(m)\psi(n)
\le \Big(\sum_{n=a}^{b}\psi(n)\Big)^{2}\ll \sum_{n=a}^{b}\psi(n).
    \end{align*}

 Through  \eqref{assume} with $\tau = \frac{d+s-1}{2d}$, we obtain
    \begin{align*}
        \sum_{a \le m < n \le b} n^{-s}
        &\leq \sum_{a \le m < n \le b} n^{d\tau - s} \psi(n)\ll \sum_{n=a}^{b} \psi(n) \sum_{m=a}^{n-1} m^{-(s - d\tau)}\ll \sum_{n=a}^{b}\psi(n).
    \end{align*}

Lemma~\ref{fc}(1) with \eqref{ine} yields
    \begin{equation*}
        \sum_{a \le m < n \le b} \left[\psi(m)n^{-s} + \psi(n)m^{-s}\right] \ll \sum_{n=a}^{b} \psi(n).
    \end{equation*}

The geometric series summation gives
    \begin{equation*}
        \sum_{a \le m < n \le b} \frac{\psi(n)}{K^{n-m}}
        \leq \sum_{n=a}^{b} \psi(n) \sum_{m=a}^{n-1} K^{m-n}
        \ll \sum_{n=a}^{b} \psi(n).
    \end{equation*}

\noindent
Synthesizing completes the estimate:
\begin{equation*}
    \sum_{a \le m < n \le b} \mu(E_{m}^{\mathbf{y}} \cap E_{n}^{\mathbf{y}}) = O\left(\sum_{n=a}^{b} \psi(n)\right).
\end{equation*}

\noindent
\textbf{Case 2: $\sum_{k=a}^{b}\psi(k) > 1$}.
Here the scaling parameter becomes nontrivial: $\varepsilon(n) = \left(\sum_{k=a}^{n}\psi(k)\right)^{-\delta} < 1$. We proceed with term-by-term analysis:

Lemma~\ref{fc}(1) with $\varepsilon(m)\varepsilon(n) < \varepsilon(m)$ gives
    \begin{align*}
        \widehat{h}_m^{+}(\mathbf{0})\widehat{h}_n^{+}(\mathbf{0})
        &= \psi(m)\psi(n)\left(1 + \frac{\varepsilon(m)}{2}\right)^d\left(1 + \frac{\varepsilon(n)}{2}\right)^d \\
        &\leq \psi(m)\psi(n) + 2^d\varepsilon(m)\psi(m)\psi(n).
    \end{align*}
    Summing over indices, we have
    \begin{equation*}
        \sum_{a \leq m < n \leq b} \widehat{h}_m^{+}(\mathbf{0})\widehat{h}_n^{+}(\mathbf{0})
        \leq \frac{1}{2}\left(\sum_{n=a}^b \psi(n)\right)^2 + 2^d\sum_{a \leq m < n \leq b} \varepsilon(m)\psi(m)\psi(n).
    \end{equation*}

    For the next term, we   cite a  harmonic series bound lemma  \cite[Lemma 4]{PZ24}$\colon$
Let $\{s_k\}$ be a sequence of nonnegative real numbers with $s_1\neq 0$. Write
$$S_n=s_1+\cdots s_n, \ \ n\in\N.$$
Then for any $N\in \N,$ the partial sums
$$\sum_{n=1}^{N}\frac{s_k}{S_n}\le 1+\log(S_N)-\log(s_1).$$

Applying the lemma with $s_k = \psi(a+k-1)$, we obtain
\begin{align*}
        \sum_{m=a}^b \varepsilon(m)\psi(m)
        &= \left(\sum_{n=a}^b \psi(n)\right)^{1-\delta} \sum_{k=1}^{b-a+1} \frac{s_k}{S_k} \\
        &\ll \left(\sum_{n=a}^b \psi(n)\right)^{1-\delta} \log\left(\sum_{n=a}^b \psi(n)\right).
    \end{align*}
Consequently,
\begin{equation*}
2^d \sum_{a \leq m < n \leq b} \varepsilon(m)\psi(m)\psi(n)
 \ll \left(\sum_{n=a}^b \psi(n)\right)^{2-\delta} \log\left(\sum_{n=a}^b \psi(n)\right).
\end{equation*}

Using \eqref{ine1} and $s > d+1$, we have that
    \begin{align*}
        \sum_{a \leq m < n \leq b} \left[\widehat{h}_m^{+}(\mathbf{0})\varepsilon(n)^{-d/2}n^{-s} + \widehat{h}_n^{+}(\mathbf{0})\varepsilon(m)^{-d/2}m^{-s}\right]
        &\ll \sum_{n=a}^b \psi(n).
    \end{align*}

     Geometric series control yields
    \begin{equation*}
        \sum_{a \leq m < n \leq b} \frac{\varepsilon(m)^{-d/2}\psi(n)}{K^{n-m}}
        \ll \left(\sum_{n=a}^b \psi(n)\right)^{1 + d\delta/2}.
    \end{equation*}

Synthesizing  and setting $\delta = \frac{2}{d+1}$ to balance exponents, we conclude that
\begin{align*}
    \sum_{a \leq m < n \leq b} \mu(E_m^{\mathbf{y}} \cap E_n^{\mathbf{y}})
    &\leq \frac{1}{2}\left(\sum_{n=a}^b \psi(n)\right)^2
    + O\Biggl(\left(\sum_{n=a}^b \psi(n)\right)^{\frac{2d}{d+1}} \log^+\left(\sum_{n=a}^b \psi(n)\right) \\
    &\quad + \left(\sum_{n=a}^b \psi(n)\right)^{\frac{2d}{d+1}} + \left(\sum_{n=a}^b \psi(n)\right)^{\frac{2d}{d+1}}\Biggr).
\end{align*}
\end{proof}

We now advance to the moment estimation. By (\ref{expect}) and  Lemma~\ref{overlap},  we
derive that
\begin{equation*}%\label{moment}
    \begin{split}
        \int_X \left(\sum_{n=a}^b (f_n(\mathbf{x}) - f_n)\right)^2 d\mu(\mathbf{x})
        &\ll \left(\sum_{n=a}^b \psi(n)\right)^{\frac{2d}{d+1}} \log^+\left(\sum_{n=a}^b \psi(n)\right) + \sum_{n=a}^b \psi(n) \\
        &= \sum_{n=a}^b \phi(n),
    \end{split}
\end{equation*}
where  \(\phi(n) := \psi(n)\Psi(n)^{\frac{d-1}{d+1}}(\log^+ \Psi(n) + 1) + 2\psi(n)\) . The equality follows from  \cite[Section 3.3]{PVZZ22}.

Applying the summation technique from Lemma~\ref{Philipp}, we obtain
\begin{align*}
    \sum_{n=1}^{N} \phi(n)
    &\leq \sum_{n=1}^{N}\psi(n)\Psi(N)^{\frac{d-1}{d+1}}(\log^{+}\Psi(N)+1) + 2\sum_{n=1}^{N}\psi(n) \\
    &= \Psi(N)^{\frac{2d}{d+1}}(\log^{+}\Psi(N)+1) + 2\Psi(N),
\end{align*}
where \(\Psi(N) = \sum_{n=1}^N \psi(n)\). Consequently, for any \(\varepsilon > 0\), we achieve that
\begin{equation*}
    R(\mathbf{x}, N) = \Psi(N) + O\left(\Psi(N)^{\frac{d}{d+1}} (\log \Psi(N) + 2)^{2+\varepsilon}\right) \quad \mu\text{-a.e. } \mathbf{x} \in E.
\end{equation*}

\medskip
{\noindent \bf  Acknowledgements}. The authors would like to thank Baowei Wang (HUST) for bringing the problem and providing valuable discussions. This work is supported by National Key R$\&$D Program of China (No. 2024YFA1013700) and NSFC (Nos. 12171172, 12201476 and 12331005).

\end{document}